\newtheorem{theorem}{Theorem}[section]
\newtheorem{lemma}[theorem]{Lemma}
\newtheorem{corollary}[theorem]{Corollary}
\newtheorem{proposition}[theorem]{Proposition}
\newtheorem{remark}[theorem]{Remark}
\newtheorem{assumption}[theorem]{Assumption}
\newcommand*{\QEDA}{\hfill\ensuremath{\square}}
\def\RR{{\mathbb R}}  
\def\NN{{\mathbb N}}      
\def\Fix{\mathop{\rm Fix}}      
\newcommand{\norm}[1]{\left\Vert #1 \right\Vert} 
\DeclareMathOperator{\dist}{dist}
\DeclareMathOperator{\spann}{span}
\DeclareMathOperator{\prog}{prog}
\DeclareMathOperator{\argmax}{argmax}
\def\A{\mathcal{A}}
\def\S{\mathcal{S}}
\def\T{\mathcal{T}}
\def\H{\mathcal{H}}
\def\EE{{\mathbb E}}        
\def\FF{{\mathcal F}}  
\def\dim{{|\S\times\A|}}  
\def\tm{{t_{\text{mix}}}}  
\def\hsp{{\norm{h^*}_{\text{sp}}}}
\def\hspp{{\norm{h^*}^7_{\text{sp}}}}
\begin{document}

\title[Stochastic Halpern and applications]{Stochastic Halpern iteration in normed spaces and applications to reinforcement learning}

\author[M. Bravo]{Mario Bravo}
\address[M.B.]{Universidad Adolfo Ib\'a\~nez, Facultad de Ingenier\'ia y Ciencias,
Diagonal Las Torres 2640, Santiago, Chile} 
\email{ \href{mailto:mario.bravo@uai.cl}{\nolinkurl{mario.bravo@uai.cl}}}

\author[J.P. Contreras]{Juan Pablo Contreras}
\address[J.P.C.]{Escuela de Ingenier\'ia Industrial, Universidad Diego Portales,
Av. Ej\'ercito Libertador 441, Santiago, Chile} 
\email{ \href{mailto:juan.contreras@udp.cl}{\nolinkurl{juan.contreras@udp.cl}}}

\begin{abstract}We analyze the oracle complexity of the stochastic Halpern iteration with minibatch, where we aim to approximate fixed-points of nonexpansive and contractive operators in a normed finite-dimensional space. We show that if the underlying stochastic oracle has uniformly bounded variance, our method exhibits an overall oracle complexity of $\tilde{O}(\varepsilon^{-5})$, to obtain $\varepsilon$ expected fixed-point residual for nonexpansive operators, improving recent rates established for the stochastic Krasnoselskii-Mann iteration. Also, we establish a lower bound of $\Omega(\varepsilon^{-3})$ which applies to a wide range of algorithms, including all averaged iterations even with minibatching. Using a suitable modification of our approach, we derive a $O(\varepsilon^{-2}(1-\gamma)^{-3})$ complexity bound in the case in which the operator is a $\gamma$-contraction to obtain an approximation of the fixed-point. As an application, we propose new model-free algorithms for average and discounted reward MDPs. For the average reward case, our method applies to weakly communicating MDPs without requiring prior parameter knowledge.\end{abstract}

\keywords{stochastic fixed-point iterations, nonexpansive maps, error bounds, convergence rates, $Q$-learning}

\subjclass[2010]{47J25, 47J26, 65J15, 65K15}

\maketitle

\section{Introduction}
Fixed-point iterations are fundamental in mathematics and engineering, providing a systematic way to approximate the solutions of several problems, including operator equations, nonlinear equations, and optimization problems. These iterations form the backbone of many optimization algorithms, such as the gradient descent or splitting methods, which are widely used in machine learning, control systems, and scientific computing.

In recent years, stochastic variants of these problems have gained significant attention because of their applicability in scenarios where the underlying operators are not known exactly or are subject to noise, uncertainty, or randomness. Nevertheless, general stochastic fixed-point iterations have received little attention compared to other related setups, such as stochastic monotone inclusions, stochastic variational inequalities, or stochastic optimization which are mainly developed in the Euclidean setting. 

The main goal of this work is then to study a simple stochastic version of the well-known Halpern iteration, where we assume that the underlying operator is nonexpansive or contracting for some general norm in finite dimension. Using a simple approach derived mainly from the construction of recursive bounds and a simple minibatching technique, we study the overall oracle complexity of this algorithm while showing applications that are intrinsically of a non-Euclidean nature.

\subsection{Problem setup}
We consider $\RR^d$ endowed with a general norm $\norm{\,\cdot\,}$, and denote by $\norm{x}_2$ the standard Euclidean norm, {\em i.e.} $\norm{x}_2^2 = \sum_{i=1}^d x_i^2$. In what follows,  $T:\RR^d \to \RR^d$ is such that
\[
\norm{Tx-Ty} \le \gamma\norm{x-y}, \qquad \forall x,y\in \RR^d.
\]
We say that $T$ is a contraction if $\gamma \in (0, 1)$, and that $T$ is nonexpansive if $\gamma = 1$. We assume that in the nonexpansive case $\Fix \, T \neq \emptyset$, where $\Fix\, T$ stands for the set of fixed-points of $T$. We are interested in computing an approximate solution of the fixed-point equation $x=Tx$.

Our method is primarly inspired by the standard Halpern iteration \cite{halpern1967fixed}
\begin{equation} \tag{$\textsc{H}$}\label{halpern_noiseless}
x^{n} = (1-\beta_{n})x^0 + \beta_{n}Tx^{n-1},  \quad \forall \, n\ge 1,
\end{equation}
where $(\beta_n)_n$ is a nondecreasing stepsize sequence satisfying $0< \beta_n <1$, $\lim_{n \to \infty}\beta_n=1$, and $x^0 \in \RR^d$ is an arbitrary initial condition.  It is worth mentioning that the original Halpern iteration uses an arbitrary $u$ called {\em the anchoring point} that can be different from the initial point $x^0$. In that case, the iteration reads $x^{n} = (1-\beta_{n})u + \beta_{n}Tx^{n-1}$. We assume here, without loss of generality, that the anchoring point is exactly $x^0$.

Naturally, iteration \eqref{halpern_noiseless} is only implementable in the case where one has access to $Tx$ at any point $x$.  In this paper, we assume that the vector $Tx$ is only accessible through $\tilde{T}$ which is a noisy version of $T$. Specifically, we consider the following
\begin{assumption} \label{assumption1} 
    The access to the operator $T$ occurs through a stochastic oracle $\tilde{T}: \RR^d\times \Xi \to \RR^d$ which is an unbiased approximation of $T$ in the sense that 
\begin{equation*}
    \EE(\tilde{T}(x,\xi)) = Tx \qquad \mbox{for all $x\in \RR^d$},
\end{equation*}
with $\xi$ drawn from some distribution $\mathcal D_x$ over $\Xi$. Additionally, we assume that $\EE \left(||\tilde{T}(x,\xi)-Tx||^2_2\right)$ is finite for every $x \in \RR^d$.
\end{assumption}
We say that the oracle $\tilde T$ has {\em uniformly bounded variance} (with bound $\sigma^2)$ if there exists a positive constant $\sigma^2$ such that
\begin{equation*}
 \sup_{x \in \RR^d}\EE \left(\norm{\tilde{T}(x,\xi)-Tx}^2_2\right)\leq \sigma^2.
\end{equation*}
Uniformly bounded variance is evidently a stronger condition than the variance requirement in Assumption~\ref{assumption1}. Importantly, our results extend beyond the uniformly bounded case, permitting the variance to grow in a controlled manner with the state variable $x^n$. This flexibility is crucial for the application to MDPs discussed in Section~\ref{sec:MDP}.

A first possible idea is to consider the iteration 
$x^n= (1-\beta_{n})x^0 + \beta_{n}\, \tilde Tx^{n-1}$. Intuitively, even if the term $\tilde{T}(x,\xi)-Tx$ is with uniformly bounded variance, because $\beta_n$ goes to 1, the last iterate will inevitably contain some nonnegligible noise term. To address this issue, we adopt a simple increasing minibatch strategy designed to mitigate the variance of the error. This approach lead to the study of the following inexact version of the Halpern iteration \eqref{halpern_noiseless}, 
\begin{equation}
    x^{n} = (1-\beta_{n})x^0 + \beta_{n}(Tx^{n-1} + U_{n}), \quad \forall \, n \geq 1,
    \label{halpern}
    \tag{$\textsc{iH}$}
\end{equation}
where $U_n$ is such that $\EE(U_n)=0$ and the term $\EE(\norm{U_n})$ can be controlled appropriately. It is worth noting that we do not assume further properties on the stochastic oracle in order to maintain our results as general as possible. See Section~\ref{sec:method} for a precise definition and related discussions. 

We are interested in the problem of estimating the oracle complexity to find an approximate fixed-point of $T$, which is measured as the number of calls to the vector stochastic oracle $\tilde T$. In the case where $T$ is nonexpansive, we estimate the oracle complexity to find a last-iterate approximate solution $x^n$ in the sense that the expected fixed-point residual $\EE (\norm{x^n - Tx^n})$ is smaller than a given tolerance. For a contracting operator $T$, we aim to find oracle complexity bounds for the expected distance of the last iterate $x^n$ to the unique fixed-point $x^*$, {\em i.e.} the quantity $\EE (\norm{x^n - x^*})$.

\vspace{1ex}
\noindent {\bf Notation.} Let us set some basic notation that we use in this paper.  Given that all norms in $\RR^d$ are equivalent, let $\mu > 0$ be such that $\norm{x} \le \mu \norm{x}_2$ for all $x\in \RR^d$ and we denote by $\norm{x}_{\text{sp}}=\max_{i}x_i - \min_{i}x_i$ the so-called span seminorm of $x$.

We adopt the standard big-$O$ notation, that is, for two real functions $f,g:$ we say $f(x)=O(g(x))$ if there are two constants $C$ and $c$ such that $f(x)\leq C g(x)$ for all $x\geq c$. We use $\tilde O$ if we ignore logarithmic factors. Analogously, $f(x)=\Omega(g(x))$ if $f(x)\geq C g(x)$ for all $x\geq c$.

\subsection{Our contributions}
In what follows, we briefly describe our main findings.

\noindent {\bf Nonexpansive maps.} We show that if $T$ is nonexpansive and the stochastic oracle is with bounded variance, then our method achieves last-iterate oracle complexity of $\tilde{O}(\varepsilon^{-5})$ for oracles with uniformly bounded variance (see Corollary~\ref{cor:bounded_variance}). This result improves over the guarantee $\tilde{O}(\varepsilon^{-6})$ established for the stochastic Krasnoselskii-Mann iteration in \cite{bravo2024stochastic} also for the last-iterate. This is, to the best of our knowledge, the only general complexity result available at the moment for the non-Euclidean setting.  

Additionally, we establish a lower bound of $\Omega(\varepsilon^{-3})$ for the overall oracle complexity for nonexpansive maps with uniformly bounded variance stochastic oracle and where the underlying operator has bounded range. In particular, this lower bound applies to every algorithm such that the iterations are a linear combination of the previous  oracle calls, including well-known algorithms such as the Krasnoselskii-Mann and Halpern iteration. Our result is also true in the case where the use of minibatches is allowed ({\em c.f.} Theorem~\ref{thm:lowerbound}). 

Finally, we extend classical results in \cite{xu2002iterative} by proving that the iteration converges to a fixed point of $T$ provided that the norm of the space is smooth ({\em c.f.} Theorem~\ref{thm:convergence}).

\noindent {\bf Contractions.} In the case where $T$ is a contraction with parameter $\gamma\in (0,1)$, we prove that if the stochastic oracle is with uniformly bounded variance, then our method finds an $\varepsilon-$optimal solution after at most $O\left(\varepsilon^{-2}(1-\gamma)^{-3}\right)$ queries (Corollary~\ref{col:uniform-contractive}). Given that this is a general result, the bound depends on the specific instance and can be refined if more structure is available. Indeed, the same holds for the nonexpansive case.

Nevertheless, it is worth noting that, as pointed out in recent work \cite{mou2022optimal}, little attention has been devoted to the study of stochastic fixed-point iterations for contracting operators without assuming some form of monotonicity for $T$ associated with an underlying Bellman operator (see Remark~\ref{rem:monotone} for details). Up to our knowledge, there is no result available in this generality showing explicit dependence on $(1-\gamma)$ and $\varepsilon$.

\noindent {\bf Markov Decision Processes (MDPs).} 
As an application of our approach, we present a new model-free algorithm for weakly communicating MDPs with average reward, assuming the use of a generative model that can sample the next state from any state-action pair.
In Theorem~\ref{teo_average} 
we show that, given  $\varepsilon > 0$ Algorithm~\ref{alg_av} can find an iterate such that the Bellman error is at most $\varepsilon$ with a sample complexity of $\tilde{O}(\dim \hspp \varepsilon^{-7})$, where $\mathcal{S}$ and $\mathcal{A}$ represent the state and action spaces, respectively, and $h^*$ denotes an optimal bias function. Our result on the Bellman error can be used to find an $\varepsilon$-optimal policy in expectation and with high probability with the same sample complexity, (see Corollary~\ref{cor:policy_error}). Our procedure is without prior knowledge in the sense that it requires no parameter information as input but it does not provide a stopping rule to achieve $\varepsilon$ precision, as it is the case for other related algorithms (see Section~\ref{sec:related} for details). We then extend our results to the discounted case, where the induced procedure outputs a $Q$ value with expected distance $\varepsilon>0$ to the optimal $Q^*$ after at most $\tilde{O}\left(\dim \varepsilon^{-2}(1-\gamma)^{-5}\right)$ samples (see Theorem~\ref{teo_discounted}). This matches some well-known results in the literature (see e.g. \cite{wainwright2019stochastic}) in the context of model-free procedures. 

Finally, we emphasize that, in both the average and discounted settings, our analysis does not rely on specific structural properties of the Bellman operator, such as component-wise monotonicity (see Remark~\ref{rem:monotone} for a discussion).

\noindent {\bf Our Approach.} The techniques we use to obtain our results are mainly based on a recursive bound approach developed first to the study of the well-known Krasnoselskii-Mann iteration,
\begin{equation}\label{km} \tag{$\textsc{KM}$}
x^{n}=(1\!-\!\alpha_{n})x^n+\alpha_{n} Tx^{n-1}\quad n\geq 1
\end{equation}
in Banach spaces, where $(\alpha_n)_n \subseteq (0,1)$ is a sequence of stepsizes \cite{baillon1996rate, cominetti2014rate, bravo2018sharp}. Later, the analysis was extended to general averaged fixed-point iteration \cite{bravo2021universal}. In the case of the Halpern iteration, being a particular case of an averaged iteration, the bounds developed are particularly explicit, giving way to an analysis that is simple yet effective. Roughly speaking, in the stochastic setting, we will adapt these ideas by compensating the effect of repeated calls to the oracle with the smooth contracting effect of the term $\beta_n<1$. Given that $\beta_n$ approaches one, we will take increasing minibatch sizes. It is worth mentioning that our explicit results are obtained for the case $\beta_n=\frac{n}{n+1}$. The reason behind this particular choice is twofold. On the one hand, it is known that in the noiseless case this stepsize gives the best possible bounds in the Euclidean setting \cite{sabach2017first,lieder2021convergence} and, up to log factors, also in the Banach framework. On the other hand, as we shall see in what follows, this choice leads to clean estimates with computable constants when calculating fixed-point residuals.

\vspace{6ex}
\subsection{Related work}\label{sec:related}\hfill

\noindent  {\bf Deterministic setup.} For nonexpansive maps in general normed spaces, recent efforts have been devoted to establish nonasymptotic convergence guarantees for the fixed-point residual $\norm{x^n - Tx^n}$ of some classical iterations. For \eqref{km}, convergence rates were initially determined by \cite{baillon1992optimal,baillon1996rate} for constant stepsizes $\alpha_n\equiv\alpha$, showing a rate of $O(1/\sqrt{n})$. Later, these results were extended by \cite{cominetti2014rate} to arbitrary steps in $(0, 1)$. The sharpness of the bound in \cite{cominetti2014rate} was obtained in \cite{bravo2018sharp}. The first proof of the rate $O(1/n)$ was established in \cite{sabach2017first} using the Halpern iteration \eqref{halpern_noiseless} with the specific stepsize $\beta_{n} = \frac{n}{n+2}$. Later, in \cite{bravo2021universal}, it was shown that the traditional Halpern stepsize choice $\beta_n = \frac{n}{n+1}$ only yields the rate $O(\frac{\log n}{n})$, which is tight, while in \cite{contreras2023optimal} the optimal coefficients for the Halpern iteration were determined, improving the constants involved.

For Euclidean spaces, the exact upper bounds were determined using the Performance Estimation Problems (PEP) approach, originally proposed in \cite{drori2014performance}. However, the convergence rate does not show an improvement compared to the general Banach setting. The classical Halpern iteration with stepsize $\beta_n = \frac{n}{n+1}$ was examined in \cite{lieder2021convergence} and the author proved the convergence rate $O(1/n)$, which improves by a logarithmic factor compared to the convergence rate in general normed spaces.

Halpern iteration has also been successfully applied in the context of monotone inclusions, which is strongly connected to fixed-point iterations \cite{bauschke2011convex} in the Euclidean setup. In particular, using PEP, \cite{kim2021accelerated} determined an optimal inertial iteration to approximate the zeros of a cocoercive operator that was shown to be equivalent to the Halpern iteration in \cite{contreras2023optimal} and \cite[Chapter 12.2]{ryu2021large}. Beyond PEP, convergence rates have also been established using a potential function approach \cite{diakonikolas2020halpern, diakonikolas2021potential, tran2021halpern, lee2021fast}.

\noindent  {\bf Stochastic setup.}  In the Euclidean setting, Halpern-type iterations have been applied to approximate solutions of Minty variational inequalities for Lipschitz operators satisfying the negative comonotone property \cite{lee2021fast}, as well as to find the zeros of a cocoercive operator \cite{cai2022stochastic}. The latter approach employs minibatches with linear growth, leveraging techniques developed for the deterministic setting of nonexpansive operators \cite{diakonikolas2020halpern}. This strategy leads to an overall oracle complexity of $O(\varepsilon^{-4})$ for obtaining an approximate solution in expectation within a tolerance $\varepsilon$. The authors also show that the rate $O(\varepsilon^{-3})$ is achievable under further assumptions on the stochastic oracle, such as the multi-point oracle and Lipschitz in expectation properties.  Under the same enhanced stochastic oracle, \cite{liang2024inexact} obtained similar results in the cocoercive setting. Recently, \cite{alacouglu2024} studied a stochastic Halpern iteration for cohypomonotone operators \cite{bauschke2021generalized}, a condition that allows possible nonmonotonicity while including monotone maps.  The authors establish, under no further assumptions on the oracle structure, a complexity of $O(\varepsilon^{-4})$ for the fixed-point residual.

Up to our knowledge, the only work that studies convergence rates for stochastic fixed-point iterations of nonexpansive maps in general normed spaces is \cite{bravo2024stochastic}. Here, the authors consider the stochastic Krasnoselskii-Mann iteration $x^{n}=(1\!-\!\alpha_{n})x^n+\alpha_{n} (Tx^{n-1} + U_{n})$, where $U_n$ is assumed to be a martingale difference sequence. The authors prove that if the variance of the noise is uniformly bounded, the stochastic Krasnoselskii-Mann iteration with constant stepsize requires at most $O(\varepsilon^{-6})$ iterations to reach a solution whose expected fixed-point residual is less than $\varepsilon$. This is done using a well-chosen constant stepsize.

 For contractive operators in general normed spaces, recent works have established an \( O(\varepsilon^{-2}) \) convergence rate. In particular, \cite{chen2020finite} employs a Lyapunov function-based analysis, while \cite{wainwright2019stochastic} studies the problem under monotonicity and quasi-contractivity assumptions defined with respect to a partial order and gauge norms induced by an underlying cone. The optimality of the \( O(\varepsilon^{-2}) \) rate can be justified through the connection between stochastic approximation and classical statistical theory. Under mild conditions, the sum of i.i.d. random variables in Banach spaces satisfies a central limit theorem. For a formal discussion, we refer the reader to \cite{mou2022optimal} and \cite{chen2020finite}.

\noindent  {\bf Lower bounds.} In the noiseless scenario, the lower bound for fixed-point iterations was determined in \cite{contreras2023optimal} showing that the rate $\norm{x^n-Tx^n}=O(1/n)$ is optimal for every averaged iteration. Moreover, this rate cannot be improved even in spaces with more structure, such as Hilbert spaces. In particular, \cite{park2022exact} proved that no method such that $x^{n}$ belongs to the linear span of $\{ x^0, Tx^0,\dots,Tx^{n-1}\}$ can achieve a better convergence rate. 

For stochastic convex optimization, the results date back to the information-based complexity developed in \cite{nemirovskij1983problem}.  In this line, improvements in the dimension dependence were obtained in \cite{agarwal2012information} for the setup of statistical estimation. Similar results were obtained in \cite{raginsky2011information} with a different technique that yields lower bounds for a special class of descent methods that are not covered in \cite{agarwal2012information}. Finding stationary points of convex functions relates to finding fixed-points of nonexpansive operators. In this context, the work \cite{foster2019complexity} established the lower bounds for stochastic convex optimization problems in the local and global stochastic oracle model. For nonconvex problems, the lower bound was established for the first time in \cite{fang2018spider} for the sum of smooth functions, matching the upper bounds of the variance-reduced methods SPIDER \cite{fang2018spider} and SNVRG \cite{zhou2020stochastic}. For further results and discussion, we refer the reader to \cite{drori2020complexity, arjevani2023lower} and the references therein.

\noindent {\bf Markov Decision Processes (MDPs).}  A significant part of the research on complexity for MDPs focuses on computing an approximately optimal policy or an aproximate value function, given access to a generative model. From an algorithmic perspective, there are two main approaches. The first is model-based methods, where transitions and rewards are sampled in advance in a full batch and then used to solve the model using dynamic programming techniques. In contrast, model-free algorithms estimate values directly from interactions with the environment \cite{sutton2018reinforcement}.

For the $\gamma$-discounted reward case, \cite{gheshlaghi2013minimax} establish a $\Omega(\dim (1-\gamma)^{-3}\varepsilon^{-2})$ complexity lower bound for obtaining an $\varepsilon$-optimal policy --also valid to compute almost optimal $Q$-factors. Algorithms with oracle complexity matching this lower bound has been obtained both in the model-base case (see, e.g \cite{agarwal2020model} for policies) and by means of model-free procedures (e.g. \cite{wainwright2019variance, jin2024truncated} for values and policies, respectively). The reader is referred to the discussion in \cite{jin2024truncated} for a recent account on this and related matters.

For average reward MDPs, to compute $\varepsilon$-optimal policies, \cite{jin2021towards} established an $ \Omega(\dim \tm\varepsilon^{-2})$ lower bound and designed a model-based algorithm achieving $\tilde O( \dim \tm \varepsilon^{-3})$, under the assumption that there exists a finite bound $\tm$ for the mixing time of all policies. A complexity lower bound of $ \Omega(\dim \hsp\varepsilon^{-2})$ for weakly communicating MDPs was established in \cite{wang2022near}. Recently \cite{zurek2025span} studied a model-based algorithm achieving this complexity for weakly communicating and multichain MDPs, a broader class.

Regarding model-free procedures, \cite{jin2020efficiently}  obtains a sample complexity of $\tilde O(\dim \tm\varepsilon^{-2})$, assuming finite mixing times. Using the same mixing hypothesis, \cite{li2024stochastic} proposes a procedure with a sample complexity of $O(\dim t_{\text{mix}}^3\varepsilon^{-2})$,
which is valid for unichain MDPs, a smaller class than weakly communicating.  It is worth noting that these works require prior knowledge to run the algorithm, often given by $\tm$ or $\hsp$.
A model-free algorithm without prior knowledge is studied in \cite{jin2024feasible}, where a sample complexity of $\tilde O(\dim t_{\text{mix}}^{8}\varepsilon^{-8})$ is obtained, again under a finite $t_{\text{mix}}$. Also, \cite{bravo2024stochastic} establishes a complexity of $O(\dim^{3/2}\varepsilon^{-a})$, with $a > 10$, for the RVI-$Q$-learning algorithm \cite{abounadi2001learning}, under the expected Bellman error metric $\EE(\norm{\H Q-Q}_\infty)$ for unichain MDPs, where $\H$ is the Bellman operator. We stress that in both \cite{jin2024feasible} and \cite{bravo2024stochastic}, as in our algorithm, there is no rule to stop the procedure at a given precision $\varepsilon$ unless more information is available. Recently, during the first revision of this paper and the preparation of its revised version, a near-optimal complexity of \( \tilde{O}(|\mathcal{S} \times \mathcal{A}| \, \|h_*\|_{\text{sp}}^2 \varepsilon^{-2}) \) was obtained in \cite{lbc25}. This work addresses weakly communicating average reward MDPs and proposes a model-free algorithm that does not require prior knowledge of system parameters. The method builds on a variant of the stochastic Halpern iteration presented here by using a recursive sampling technique that exploits the specific structure of the Bellman operator. Also, it implements a stopping rule to achieve the prescribed precision. Another work that implements a stopping rule without prior knowledge is \cite{tuynman24}, where a model-based algorithm is proposed through the estimation of the diameter, and therefore only being applicable to communicating MDPs.

\subsection{Organization of the paper}
The remainder of the paper is organized as follows. In Section~\ref{sec:method} we present the precise definition of our method along with the minibatch variance reduction. Section~\ref{sec:nonexpansive} is devoted to the study of the case in which $T$ is nonexpansive while in Section~\ref{sec:contracting} we study the contracting case. In Section~\ref{sec:MDP} we develop our applications to MDPs in the average and discounted setting. Finally, Appendix~\ref{app:proofs} gathers some deferred proofs.

\section{The Stochastic Halpern iteration} \label{sec:method}

Algorithm~\ref{alg_stochastic_halpern} outlines the stochastic Halpern iteration with minibatches. At each iteration $n \leq N$, a minibatch estimator is constructed with $k_n$ queries to $\tilde T$ at $x^{n-1}$ using $k_n$ independent fresh samples from $\mathcal D_{x^{n-1}}$. Using this estimate, the next iterate is calculated via the Halpern iteration.

\begin{algorithm}[H]
\caption{Stochastic Halpern method}
\label{alg_stochastic_halpern}
\begin{algorithmic}[1]
\Require Stochastic oracle $\tilde{T}$, $(k_n)_n$ minibatch size sequence, number of iterations $N$, initial point $x^0$    
\For{$n=1,\dots, N$}
\State Set the size of the minibatch $k_n$ 
\State Sample $k_n$ independent queries at $x^{n-1}$: $\tilde{T}(x^{n-1}, \xi^n_{1}), \dots, \tilde{T}(x^{n-1}, \xi^n_{k_n})$.    
\State Set
$x^n \leftarrow (1-\beta_n)x^0 + \beta_n\left(\frac{1}{k_n}\displaystyle{\sum_{j=1}^{k_n}} \tilde{T}(x^{n-1}, \xi^n_{j})\right)$
\EndFor
\State Return $x^N$
\end{algorithmic}
\end{algorithm}
Notice that if we define the error term at the iteration $n \geq 1$  by 
\begin{equation*}
U_n := \frac{1}{k_n}\displaystyle{\sum_{j=1}^{k_n}} \tilde{T}(x^{n-1}, \xi^n_{j}) - Tx^{n-1},
\end{equation*}
then Algorithm~\ref{alg_stochastic_halpern} can be cast in the form \eqref{halpern}. Indeed, we will refer to both procedures indistinctly when there is no confusion.

By construction, $Tx^{n-1} + U_n$ is an unbiased estimator of $Tx^{n-1}$ and may have a reduced variance compared to the error computed with a single query. 
Let us introduce the following additional notation. For all $i,n\in \NN$:
\begin{equation}    \label{notation_beta_sigma}
\left \{
\begin{aligned} 
    &\sigma_n:=\EE(\norm{U_n})\,\,\, \text{and}\,\,\,\sigma_0=\beta_0=0;\\
    & B_i^n:= \prod_{j=i}^n \beta_j, \,\,\,\text{with the convention}\,\,\,  B_{i}^n=1 \,\,\, \text{if}\,\,\,  0\leq n<i.
    \end{aligned}
    \right.
\end{equation}
For the sake of completeness, we recall how a standard argument allows to transfer the variance reduction into a bound on $\sigma_n$. 

For a given iteration $n$, let $\xi^n_{1},\ldots,\xi^n_{k_n}$ be $k_n$ independent samples of the distribution $\mathcal D_{x^{n-1}}$ and let $e_n^2=\EE (||\tilde{T}(x^{n-1}, \xi^n_{1}) - T x^{n-1}||_2^2)$ be the variance of the error term of one query. Then
\begin{equation}
\sigma_n^2=\EE(\norm{U_n})^2 \le \frac{\mu^2}{k_n^2} \EE\left(\norm{\sum \nolimits_{j=1}^{k_n} \tilde{T}(x^{n-1}, \xi^n_{j}) - Tx^{n-1} }^2_2\right) \le \frac{\mu^2}{k_n}e_n^2,
\label{minibatch}
\end{equation}
where we have use Jensen's inequality, sample independence, and the equivalence between the norms. We observe that $\EE(\norm{U_n})$ scales by a factor $\frac{\mu}{\sqrt{k_n}}$ compared to $e_n$.  

As we already discussed, there exist more sophisticated variance reduction techniques to control $\sigma_n$. However, they often necessitate additional assumptions regarding the stochastic oracle's nature or assume extra operator structure.

\section{The nonexpansive case} \label{sec:nonexpansive}

In this section, we assume that $T$ is a nonexpansive operator. As we mentioned, we are interested in the number of iterations and the overall oracle complexity required by iteration \eqref{halpern} to obtain a solution $x^N$ such that $\EE(\norm{x^N-Tx^N}) \le \varepsilon$ for a given tolerance $\varepsilon>0$.  The following condition will be crucial for the results of this section, since it allows us to write some recursive bound for the expected fixed-point residual. 
\begin{equation}\tag{$\textsc{H}_1$} \label{H1}
\text{There exists } \overline \kappa <+\infty \, \text{ such that }\, \sup \nolimits_{n \geq 0}\EE (\norm{T x^n-x^0}) \leq \overline \kappa < +\infty.
\end{equation}

\subsection{Bounds for the expected fixed-point residual}

The following proposition provides a bound for $\EE(\norm{x^n-Tx^n})$, valid for any increasing sequence of steps $(\beta_n)_n\subseteq (0,1)$. 

\vspace{1ex}
\begin{proposition} \label{prop:residual-nonexpansive}
Let $(x^n)_{n\ge 1}$ the sequence generated by the Halpern iteration \eqref{halpern}. Assume that \eqref{H1} holds for some $\overline \kappa \geq 0$. Then, for all $n\ge 1$
\[
\EE(\norm{x^n-Tx^n}) \le \overline\kappa(1-\beta_n) + \sum_{i=1}^{n} B_i^n \left(\overline\kappa(\beta_i -\beta_{i-1}) + \beta_i\sigma_i+\beta_{i-1}\sigma_{i-1} \right )+ \beta_n\sigma_n,
\]
where $B^n_i$ and $\sigma_i$ are defined in \eqref{notation_beta_sigma}.
\end{proposition}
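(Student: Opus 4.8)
The plan is to establish a one-step recursion for the fixed-point residual and then unroll it. First I would rewrite the iteration \eqref{halpern} in the anchored form: since $x^n=(1-\beta_n)x^0+\beta_n(Tx^{n-1}+U_n)$, we get $x^n - x^0 = \beta_n(Tx^{n-1}+U_n - x^0)$, and more usefully $x^n = (1-\beta_n)x^0 + \beta_n Tx^{n-1} + \beta_n U_n$. The residual $x^n - Tx^n$ is what we want to bound, so I would introduce the auxiliary quantity $d_n := x^n - x^{n-1}$ (the "displacement") and relate it to the residual via $x^n - Tx^n$. The standard trick for Halpern-type residual bounds is: from the definition, $Tx^n$ appears at step $n+1$; instead one compares $x^n - Tx^n$ to $x^n - Tx^{n-1}$ using nonexpansiveness, $\norm{Tx^n - Tx^{n-1}} \le \norm{x^n - x^{n-1}}$. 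So the first reduction is $\norm{x^n - Tx^n} \le \norm{x^n - Tx^{n-1}} + \norm{x^n - x^{n-1}}$.

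Next I would control each of these two pieces. For $\norm{x^n - Tx^{n-1}}$: from the iteration, $x^n - Tx^{n-1} = (1-\beta_n)(x^0 - Tx^{n-1}) + \beta_n U_n$, so $\EE\norm{x^n - Tx^{n-1}} \le (1-\beta_n)\EE\norm{Tx^{n-1}-x^0} + \beta_n\sigma_n \le (1-\beta_n)\overline\kappa + \beta_n\sigma_n$, using \eqref{H2} and \eqref{notation_beta_sigma}. The harder and more delicate piece is the displacement term $\EE\norm{x^n - x^{n-1}}$, which requires its own recursion. Writing $x^n - x^{n-1} = (\beta_{n-1}-\beta_n)x^0 + \beta_n Tx^{n-1} - \beta_{n-1}Tx^{n-2} + \beta_n U_n - \beta_{n-1}U_{n-1}$, and then grouping $\beta_n Tx^{n-1} - \beta_{n-1}Tx^{n-2} = \beta_n(Tx^{n-1}-Tx^{n-2}) + (\beta_n - \beta_{n-1})Tx^{n-2}$, nonexpansiveness gives $\norm{Tx^{n-1}-Tx^{n-2}}\le\norm{x^{n-1}-x^{n-2}}$. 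Taking expectations and using \eqref{H2} on the $Tx^{n-2}-x^0$ term (which is why the factor $(\beta_n-\beta_{n-1})$ multiplies $\overline\kappa$, and this is where monotonicity $\beta_n\ge\beta_{n-1}$ keeps signs clean), I obtain
\[
\EE\norm{x^n - x^{n-1}} \le \beta_n\EE\norm{x^{n-1}-x^{n-2}} + \overline\kappa(\beta_n-\beta_{n-1}) + \beta_n\sigma_n + \beta_{n-1}\sigma_{n-1}.
\]

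Then I would solve this linear recursion explicitly. With the convention $B_i^n = \prod_{j=i}^n \beta_j$ and the base case handled by $\sigma_0=\beta_0=0$ (so the $n=1$ term behaves correctly since $x^0 - x^{-1}$ never enters), unrolling gives
\[
\EE\norm{x^n - x^{n-1}} \le \sum_{i=1}^n B_{i+1}^n\Big(\overline\kappa(\beta_i - \beta_{i-1}) + \beta_i\sigma_i + \beta_{i-1}\sigma_{i-1}\Big).
\]
Combining this with the bound on $\EE\norm{x^n - Tx^{n-1}}$ and reindexing/absorbing the product factors ($B_{i+1}^n$ versus $B_i^n$, noting $\beta_i \le 1$ so $B_{i+1}^n \le B_i^n$, and peeling off the $i=n$ term which contributes the loose $\beta_n\sigma_n$ and, together with $(1-\beta_n)\overline\kappa$, the leading $\overline\kappa(1-\beta_n)$), one arrives at the stated inequality. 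The main obstacle is bookkeeping the two coupled recursions simultaneously and making sure the product weights $B_i^n$ line up exactly as in the statement — in particular tracking where the extra $\beta_n\sigma_n$ and the $\overline\kappa(1-\beta_n)$ come from (the direct residual-vs-$Tx^{n-1}$ comparison) versus the telescoped sum (the displacement recursion), and verifying the endpoint conventions at $i=0,1$. Everything else is routine triangle-inequality estimation and taking expectations, using that $\EE\norm{U_i}=\sigma_i$.
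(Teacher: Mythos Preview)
Your overall strategy is the same as the paper's --- control the residual via the displacement $\norm{x^n-x^{n-1}}$, set up a linear recursion for the displacement, and unroll --- but two small choices differ from the paper and together produce a bound that is \emph{weaker} than the one stated, and your attempt to close the gap rests on a false inequality.

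First, in the displacement recursion you group $\beta_n Tx^{n-1}-\beta_{n-1}Tx^{n-2}=\beta_n(Tx^{n-1}-Tx^{n-2})+(\beta_n-\beta_{n-1})Tx^{n-2}$, giving the recursion coefficient $\beta_n$. The paper groups the other way, $(\beta_n-\beta_{n-1})Tx^{n-1}+\beta_{n-1}(Tx^{n-1}-Tx^{n-2})$, so the coefficient is $\beta_{n-1}$ and the unrolled product is $B_i^{n-1}$. Second, you bound the residual by $\norm{x^n-Tx^{n-1}}+\norm{x^n-x^{n-1}}$, putting weight $1$ on the displacement; the paper instead writes $x^n-Tx^n=(1-\beta_n)(x^0-Tx^n)+\beta_n(Tx^{n-1}-Tx^n+U_n)$ directly, which puts weight $\beta_n$ on the displacement. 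That extra $\beta_n$ multiplies $B_i^{n-1}$ into exactly $B_i^n$. With your two choices you end up with $\sum_{i=1}^n B_{i+1}^n(\cdots)$ instead of $\sum_{i=1}^n B_i^n(\cdots)$.

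The genuine error is your claim ``$\beta_i\le 1$ so $B_{i+1}^n\le B_i^n$''. Since $B_i^n=\beta_i\,B_{i+1}^n$ and $0<\beta_i<1$, the inequality goes the \emph{other} way: $B_{i+1}^n\ge B_i^n$. So you cannot pass from your $B_{i+1}^n$ to the statement's $B_i^n$ by monotonicity; your bound is strictly weaker (though still adequate for the downstream rate with $\beta_n=\tfrac{n}{n+1}$). To prove the proposition as stated, replace your first reduction by the paper's direct decomposition of $x^n-Tx^n$ (yielding the factor $\beta_n$ on $\norm{x^n-x^{n-1}}$) and regroup the displacement difference so the recursion coefficient is $\beta_{n-1}$; then the products line up as $B_i^n$ without any inequality fudge.
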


\begin{proof}
Recall that $\sigma_0=\beta_0=0$ and let $\kappa_n= \norm{Tx^n-x^0}$. We start by observing that the (random) sequence $(P_n)_n$ defined inductively as $P_0=0$ and
\[ P_n=
\kappa_{n-1} (\beta_n-\beta_{n-1})+\beta_{n-1} P_{n-1}+\beta_n\norm{U_n}+\beta_{n-1}\norm{U_{n-1}},\,\,  n\geq 1,
\]
is such that $\norm{x^n-x^{n-1}}\leq P_n$ for all $n \in \NN$. Indeed, for the first term we write
\[
\norm{x^1-x^0}= \beta_1\norm{x^0 -Tx^0 + U_1} \leq \kappa_0 (\beta_1-\beta_0) + \beta_1 \norm{U_1}=P_1.
\]
From the definition of \eqref{halpern}, using again the triangular inequality, the nonexpansivity of $T$, and the induction hypothesis, we have that for $n \geq 2$,
\begin{equation*}
\begin{aligned}
\!\!\norm{x^n\!-\!x^{n-1}} &=\norm{(1\!-\! \beta_{n}) x^0 + \beta_n(Tx^{n-1} +U_n)-(1\!-\!\beta_{n-1})x^0 \!-\! \beta_{n-1}(Tx^{n-2} +U_{n\!-\!1})}\\
&=\norm{(\beta_{n}\!-\!\beta_{n-1})(T x^{n-1}\!-\!x^0)\! + \!\beta_{n-1}(Tx^{n-1}\!\!-Tx^{n-2}) \!+\!\beta_{n} U_n\!-\!\beta_{n-1}U_{n-1}} \\
&\leq (\beta_n-\beta_{n-1})\norm{T x^{n-1}\!-\!x^0}\!+\! \beta_{n-1}\norm{x^{n-1}\!-\!x^{n-2}}\!+\!\beta_n\norm{U_{n}}\!+\!\beta_{n-1}\norm{U_{n-1}} \\
&\le \kappa_{n-1} (\beta_n-\beta_{n-1})+\beta_{n-1} P_{n-1}+\beta_n\norm{U_{n}}\!+\!\beta_{n-1}\norm{U_{n-1}}=P_n.
\end{aligned}
\end{equation*}

Then, it follows that $P_n$ can be computed explicitly as
\[
P_n=\sum_{i=1}^{n}B_i^{n-1} \left ( \kappa_{i-1}(\beta_i -\beta_{i-1}) + \beta_i\norm{U_{i}}+\beta_{i-1}\norm{U_{i-1}} \right ).
\]
where we used the convention $B^{n-1}_n = 1$. Now, casting \eqref{halpern} as
   \[
    x^{n} - Tx^n = (1-\beta_{n})(x^0-Tx^{n}) + \beta_{n} (Tx^{n-1}-Tx^n + U_{n}), \quad \forall \, n \geq 1,
   \]
we get that 
\begin{equation}\label{eq:residual}
\begin{aligned}
\!\!\!\norm{x^n-\!Tx^n} & \!\!=\! \!\norm{(1\!-\!\beta_n)(x^0-Tx^{n})+\beta_n(Tx^{n-1}-Tx^n+U_n )} \\
&\le \kappa_n (1-\beta_n) +  \beta_n  \norm{x^n-x^{n-1}}+ \beta_n\norm{U_n} \\
&\leq \kappa_n(1-\beta_n) +  \beta_n  P_n+ \beta_n\norm{U_n},\\
& = \kappa_n(1-\!\beta_n)\! +\! \\
&+\sum_{i=1}^{n}B_i^{n}\left ( \!\kappa_{i-1}(\beta_i -\beta_{i-1})\! + \!\beta_i\norm{U_i}\!\!+\!\!\beta_{i-1}\norm{U_{i-1}} \!\right )+ \beta_n\norm{U_n}
\end{aligned}
\end{equation}
where we used that $B^n_i = \beta_nB^{n-1}_i$ for every $i=1,...,n$ and that $B^{n-1}_{n} = 1$. Taking expectations in the expression above, and using that $\sigma_i = \EE(\norm{U_i})$, we obtain
\begin{equation*}
\begin{aligned}
\EE(\norm{x^n\!\!-\!\!Tx^n}) &\leq \EE(\kappa_n)(1-\!\beta_n)+ \sum_{i=1}^{n}B_i^{n}\left ( \EE(\kappa_{i-1})(\beta_i -\beta_{i-1})\! + \!\beta_i\sigma_i +\beta_{i\!-\!1}\sigma_{i-1} \!\right )\!+\! \beta_n\sigma_n. 
\end{aligned}
\end{equation*}
The conclusion follows from the fact that $\sup_{n\geq 0} \EE(\kappa_n) \leq \overline \kappa$.
\end{proof}

We complement the result in Proposition~\ref{prop:residual-nonexpansive} with the following lemma, which provides two simple independent and sufficient conditions for \eqref{H2} to hold.

\vspace{1ex}

\begin{lemma} \label{lem:kappa}
{\em Let $T$ be nonexpansive and $(x^n)_n$ be the sequence generated by \eqref{halpern}. Recalling the notation \eqref{notation_beta_sigma}, it holds:
\begin{itemize}
    \item[$(i)$] If $T$ has bounded range i.e. there exist a constant $M>0$ such that $\norm{Tx} \le M$ for all $x\in \RR^d$, then the condition \eqref{H1} holds with $\bar \kappa = M + \norm{x^0}$.
    \item[$(ii)$] If the sequences $(\beta_n)_n$ and $(\sigma_n)_n$ satisfies
    \begin{equation}\label{H2}\tag{$\textsc{H}_2$}
    \sup_{n\ge 1}\sum_{i=1}^n B_i^n\sigma_i\leq S < +\infty,,
    \end{equation}
    for some $S\ge 0$, then condition \eqref{H1} holds with $\bar \kappa = 2\dist(x^0, \Fix(T)) + {S}$. 
    
    In particular, if $\sum_n \sigma_n <+\infty$ then  \eqref{H2} holds. 
\end{itemize}
}
\end{lemma}

\begin{proof} 
Part $(i)$ follows from the triangular inequality. For part $(ii)$, let us fix $x^*\in \Fix(T)$. Using the fact that $T$ is nonexpansive we have
\begin{align}
    \norm{x^n-x^*} & = \norm{(1-\beta_n)(x^0-x^*) + \beta_n(Tx^{n-1}-x^*) + \beta_nU_n} \nonumber \\
    & \le (1-\beta_n)\norm{x^0-x^*} + \beta_n \norm{x^{n-1}-x^*} + \beta_n\norm{U_n} \nonumber \\
    &=\sum_{i=0}^n B^n_{i+1}( \norm{x^0-x^*}(1-\beta_i)+\beta_i\norm{U_n})\nonumber  \\
    &=  \norm{x^0-x^*} + \sum_{i=1}^n B^n_i \norm{U_i}, \label{eq:bounded}
\end{align}

where we used $\sum_{i=0}^n B^n_{i+1}(1-\beta_i)=1$ and the convention $B^n_{n+1} = 1$. Taking expectations, we have
\begin{equation*}
\begin{aligned}
\EE(\norm{Tx^n-x^0})&\le \EE(\norm{Tx^n-Tx^*})+\norm{x^*-x^0}\\
&\le 2 \norm{x^*-x^0} + \sum_{i=1}^n B_i^n\sigma_i \leq  2 \norm{x^*-x^0}+ S.
\end{aligned}
\end{equation*}
Finally, $\sup_{n \geq 0}\EE(\norm{Tx^n-x^0})\leq 2\inf_{x^* \in \Fix T}\norm{x^*-x^0} +S.$
\end{proof}

The following result establishes a general convergence rate for the expected residual of the Halpern iteration in the special case where $\beta_n = \frac{n}{n+1}$. This stepsize is easy to handle and, more importantly, leads to optimal residual estimations in the noiseless case in the Hilbert setting, and near-optimal for general Banach spaces.  

\vspace{1ex}
\begin{theorem} \label{thm:rate-nonexpansive}
 Let $(x^n)_n$ be the sequence generated by the Halpern iteration~\eqref{halpern} with the stepsize $\beta_n=\frac{n}{n+1}$. Assume that \eqref{H1} holds for some $\overline \kappa \geq 0$ and that $\sigma_n= \EE(\norm{U_n})$ is such that $\sum_{n=1}^N n\sigma_n =\tilde O(1)$. Then, given $\varepsilon>0$,  $\EE(\norm{x^N-Tx^N})\le \varepsilon$ for all $N \geq \bar N= \tilde{O}(\varepsilon^{-1})$ iterations.
\end{theorem}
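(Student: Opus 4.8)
The plan is to plug the specific stepsize $\beta_n = \frac{n}{n+1}$ into the bound from Proposition~\ref{prop:residual-nonexpansive} and estimate each of the four resulting terms. First I would record the elementary identities for this choice: $1-\beta_n = \frac{1}{n+1}$, $\beta_i - \beta_{i-1} = \frac{i}{i+1} - \frac{i-1}{i} = \frac{1}{i(i+1)}$, and most importantly the telescoping product
\[
B_i^n = \prod_{j=i}^n \frac{j}{j+1} = \frac{i}{n+1}.
\]
This last identity is the crux: it turns the products $B_i^n$ appearing in Proposition~\ref{prop:residual-nonexpansive} into the explicit ratio $\frac{i}{n+1}$, which is what makes the whole analysis clean.

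With these substitutions, the bound becomes (schematically)
\[
\EE(\norm{x^N - Tx^N}) \le \frac{\overline\kappa}{N+1} + \sum_{i=1}^{N} \frac{i}{N+1}\left(\frac{\overline\kappa}{i(i+1)} + \beta_i\sigma_i + \beta_{i-1}\sigma_{i-1}\right) + \beta_N\sigma_N.
\]
I would then bound the three pieces separately. The $\overline\kappa$-terms: $\frac{\overline\kappa}{N+1}$ is obviously $O(1/N)$, and $\sum_{i=1}^N \frac{i}{N+1}\cdot \frac{\overline\kappa}{i(i+1)} = \frac{\overline\kappa}{N+1}\sum_{i=1}^N \frac{1}{i+1} = O\!\left(\frac{\log N}{N}\right)$, using the harmonic sum — this is exactly the $\tilde O(1/N)$ noiseless Halpern rate for this stepsize, consistent with what the text recalls from \cite{bravo2021universal}. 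The noise terms: since $\beta_i, \beta_{i-1} \le 1$, the sum $\sum_{i=1}^N \frac{i}{N+1}(\beta_i\sigma_i + \beta_{i-1}\sigma_{i-1})$ is controlled by $\frac{1}{N+1}\sum_{i=1}^N i\sigma_i$ (after reindexing the $\sigma_{i-1}$ part, which only shifts indices and costs a harmless constant), and this is $\frac{1}{N+1}\cdot \tilde O(1) = \tilde O(1/N)$ by the hypothesis $\sum_{n=1}^N n\sigma_n = \tilde O(1)$. Finally $\beta_N\sigma_N \le \sigma_N \le \frac{1}{N}\sum_{i=1}^N i\sigma_i / 1$... more carefully, $N\sigma_N \le \sum_{i=1}^N i\sigma_i = \tilde O(1)$ so $\sigma_N = \tilde O(1/N)$. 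Collecting everything, $\EE(\norm{x^N - Tx^N}) = \tilde O(1/N)$, hence $N = \tilde O(\varepsilon^{-1})$ iterations suffice.

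One point that needs a word of care: Proposition~\ref{prop:residual-nonexpansive} requires hypothesis \eqref{H2}, and the theorem statement does not assume it explicitly. I would note that the assumption $\sum_{n=1}^N n\sigma_n = \tilde O(1)$ implies $\sum_n \sigma_n < +\infty$ (since $\sigma_n \le n\sigma_n$ eventually, or more simply the partial sums of $\sigma_n$ are dominated by those of $n\sigma_n$), so part $(ii)$ of the preceding Lemma gives \eqref{H2} for free — provided $\Fix\, T \neq \emptyset$, which is the standing assumption in the nonexpansive setting.

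The main obstacle is not any single hard estimate — each sum is routine once $B_i^n = \frac{i}{n+1}$ is in hand — but rather the bookkeeping: keeping track of the index shift between the $\sigma_i$ and $\sigma_{i-1}$ contributions, and being honest about where the logarithmic factor enters (the $\overline\kappa$-term, and possibly the $\tilde O(1)$ noise hypothesis itself) so that the final $\tilde O(\varepsilon^{-1})$ is correctly stated with logs absorbed. I expect the proof in the paper to be short precisely because all of this collapses neatly for $\beta_n = \frac{n}{n+1}$.
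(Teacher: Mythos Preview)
Your proposal is correct and follows essentially the same route as the paper: substitute $\beta_n=\tfrac{n}{n+1}$ (so $B_i^N=\tfrac{i}{N+1}$) into Proposition~\ref{prop:residual-nonexpansive}, collect the $\overline\kappa$-terms into $\tfrac{\overline\kappa}{N+1}(1+\sum_{n\le N}\tfrac{1}{n+1})$, and control the noise terms by $\tfrac{1}{N+1}\sum_{n\le N} n\sigma_n$. The paper does the reindexing a bit more tightly, combining the $\sigma_i,\sigma_{i-1}$ contributions and the trailing $\beta_N\sigma_N$ into a single coefficient $\tfrac{2n+1}{n+1}\sigma_n\le 2\sigma_n$, but this is cosmetic.

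One small correction on how you justify \eqref{H2}: the hypothesis $\sum_{n\le N} n\sigma_n=\tilde O(1)$ allows polylogarithmic growth, so it does \emph{not} force $\sum_n\sigma_n<\infty$ (your domination argument only yields $\sum_{n\le N}\sigma_n=\tilde O(1)$). The paper instead checks condition \eqref{H3} directly: with $B_i^N=\tfrac{i}{N+1}$ one has $\sum_{i\le N} B_i^N\sigma_i=\tfrac{1}{N+1}\sum_{i\le N} i\sigma_i$, which is uniformly bounded (in fact $\to 0$) under the $\tilde O(1)$ assumption, and then the Lemma's part $(ii)$ delivers \eqref{H2}. Use that route and your argument is complete.
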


\begin{proof}
For this particular choice of $\beta_n$, we have that $B_i^n=\frac{i}{n+1}$ for all $1 \leq i\leq n$. Then 
\[
S = \sup_{N\geq 1}\sum_{n=1}^N B_n^N  \sigma_n= \sup_{N\geq 1}\frac{1}{N+1}\sum_{n=1}^N n \sigma_n< \infty, 
\]
given that $\sum_{n= 1}^N n \sigma_n = \tilde{O}(1)$ and we conclude that \eqref{H2} holds and \eqref{H1} is satisfied with $\bar \kappa=S+2\dist(x^0, \Fix T)$. Now, invoking the estimates given by Proposition~\ref{prop:residual-nonexpansive} 
\begin{equation} \label{bound-expected-nonexpansive} 
\begin{aligned}
 \!\!\! \EE(\norm{ x^N-Tx^N }) & \le \! \frac{\bar{\kappa}}{N\!+\!1} \!+\!  \frac{1}{N\!+\!1}\sum_{n=1}^N \left( \frac{\bar{\kappa}}{n\!+\!1} \!+\! \frac{n^2}{n\!+\!1}\sigma_n \!+\!(n\!-\!1)\sigma_{n-1} \right ) \!+\!\frac{N}{N\!+\!1}\sigma_N\\
&=\frac{1}{N\!+\!1}\left(\bar{\kappa}\sum_{n=0}^{N}\frac{1}{n\!+\!1}  \!+\!\sum_{n=1}^N\frac{n^2}{n\!+\!1}\sigma_n +\sum_{n=2}^N(n\!-\!1)\sigma_{n-1}+ N \sigma_N\right )\\
  &\le\frac{\bar{\kappa}}{N\!+\!1}\sum_{n=0}^{N}\frac{1}{n\!+\!1}+\frac{2}{N\!+\!1}\sum_{n=1}^N n\sigma_n - \frac{1}{N+1}\sum_{n=1}^N \sigma_n,
\end{aligned}
\end{equation}
where we have used in the second line that $\sigma_0=0$. We observe that the first term $\tilde{O}(1/N)$ and the second also is by hypothesis. Therefore, $\EE(\norm{x^N-Tx^N}) = \tilde{O}(1/N)$, and $\EE(\norm{ x^N-Tx^N }) \le \varepsilon$, for all $N \geq \bar N= \tilde{O}(\varepsilon^{-1})$ iterations as claimed.
\end{proof}
As we already mentioned the rate ${O}(1/N)$ has been established in the noiseless case for Hilbert and Banach spaces. We are not aware of a result that establish a rate $\tilde{O}(1/N)$ in the case of general normed spaces for the inexact setup. The convergence rate above is expressed in terms of the number of iterations. It is important to note that the number of queries to the stochastic oracle may not be proportional to the number of iterations due to the use of increasing minibatches.

The choice $\beta_n = \frac{n}{n+1}$ in our analysis results in a clean and simple bound but introduces a logarithmic factor. In the noiseless case, this logarithmic factor can be avoided by using $\beta_n = \frac{n}{n+2}$. However, in the stochastic setting, where we require $\sum_{n=1}^N n\sigma_n$ to be $\tilde{O}(1)$, the logarithmic factor inevitably arises. While imposing a stronger condition, such as $\sum_{n=1}^\infty n\sigma_n < \infty$, could eliminate this factor, doing so would necessitate larger minibatch sizes, ultimately leading to a deterioration in the convergence rate. Moreover, it is indeed possible to write a recursive scheme for the optimal choice of the stepsize $\beta_n$ as in the noiseless case (see \cite[Theorem 2]{contreras2023optimal} for precise details). This is, we can attempt to optimize the bound on $\beta_n$ in Proposition~\ref{prop:residual-nonexpansive} for given values of $\sigma_n$. Numerical experiences seem to show that the bounds improve only by a logarithmic factor.

The following result, whose proof is provided in  Appendix~\ref{apx:explicito}, establishes explicit bounds for the sample complexity when a precise estimate of $\sum_{n=1}^N n\sigma_n$ is available. Despite its simplicity, this bound is useful in various contexts discussed later, where slight concessions in the bounds, affecting only logarithmic terms, are made to keep the constants as explicit as possible.
\begin{lemma}\label{lem:sigma_explicito}
Under the assumptions of Theorem~\ref{thm:rate-nonexpansive}, let us suppose additionally that there exists $C\geq 0$ such that $\sum_{n=1}^N n\sigma_n \leq C \ln(N+1)$ for all $N \geq 1$. Let $\rho \geq 0$ such that $3\bar{\kappa} + 2C\leq \rho$. Then, for all $\varepsilon>0$, if $\varepsilon/\rho < e^{-1}$, then 
\[
\EE(\norm{x^N-Tx^N})\le \varepsilon \quad \text{for all }\,\, N \geq \mbox{$\left \lceil 2\frac{\rho}{\varepsilon} \ln\left (\frac{\rho}{\varepsilon}\right )\right \rceil$} .
\]
When $\varepsilon/\rho \geq e^{-1}$, then $N=1$ iteration is sufficient.
\end{lemma}

The following corollary provides an upper bound on the number of queries for the stochastic Halpern iteration with minibatching when the stochastic oracle has uniformly bounded variance. Due to Lemma~\ref{lem:sigma_explicito}, the constants involved can be explicitly estimated in this case. For the sake of simplicity, we assume in what follows that $\varepsilon>0$ is sufficiently small.

\vspace{-1ex}
\begin{corollary} \label{cor:bounded_variance}
 Assume that $\tilde{T}$ has uniformly bounded variance $\sigma^2$. Let $(x^n)_n$ be the sequence generated by the Halpern iteration~\eqref{halpern} with the stepsize $\beta_n=\frac{n}{n+1}$ and batch size $k_n = n^{4}$. Then, given $\varepsilon>0$, the Algorithm~\ref{alg_stochastic_halpern} finds $x^N$ such that $\EE(\norm{x^N-Tx^N})\le \varepsilon$ using at most $\tilde{O}(L^5\varepsilon^{-5})$ queries to the stochastic oracle, where $L=\max\{\mu \sigma, \dist(x^0, \Fix T)\}$.
\end{corollary}
\vspace{-2ex}
\noindent {\em Proof.} Equation~\eqref{minibatch} guarantees that 
 $\sigma_n\le \mu\sigma{n^{-2}},$ 
 which readily implies that
 $$\sum_{n=1}^N n\sigma_n\leq \mu\sigma\sum_{n=1}^N \frac{1}{n}\leq \frac{3}{2}\mu\sigma\ln(N+1).$$ 
 Recalling that for this particular choice of $\beta_n$, $B_i^n= \frac{i}{n+1}$, $i=1,\ldots,n$, we have that
 \[
\sum_{i=1}^n B_i^n\sigma_i=\frac{1}{n+1} \sum_{i=1}^n i \sigma_i\leq \mu\sigma\frac{1}{n+1}\sum_{i=1}^n \frac{1}{i} \leq \frac{\mu\sigma}{2}.
 \]
 Therefore, condition~\eqref{H2} holds with $S= \mu\sigma/2$ and, from Lemma~\ref{lem:kappa}$(ii)$,  \eqref{H1} is verified with $\bar \kappa= \mu\sigma/2 + 2 \dist(x^0, \Fix T)$.  Then, using Lemma~\ref{lem:sigma_explicito} with $C=\frac{3}{2}\mu\sigma$ implies that $\EE(\norm{x^N-Tx^N})\le \varepsilon$ for all $N$ larger that $ \bar N= \left \lceil 2\frac{\rho}{\varepsilon} \ln\left (\frac{\rho}{\varepsilon}\right )\right \rceil $, with \vspace{-1ex}
 $$\frac{9}{2}\mu\sigma+6 \dist(x^0, \Fix T)\leq 12 \max \{\mu\sigma,\dist(x^0, \Fix T)\}:=\rho.$$ Finally, as each iteration $n=1,...,\bar N$ requires $n^4$ queries to the stochastic oracle, the overall oracle complexity to achieve precision $\varepsilon>0$ is 
 $$ \qquad \qquad \qquad \qquad \qquad \qquad  \sum_{n=1}^{\bar N} k_n = \sum_{n=1}^{\bar N} n^{4} \le \frac{(\bar N+1)^5}{5}\leq \frac{32}{5} \left (\frac{\rho}{\varepsilon}\ln\left (\frac{\rho}{\varepsilon}\right ) +1\right )^5. \qquad \qquad \qquad \qquad \qquad  \,\, \QEDA$$

Although we have examined in detail the special case of uniformly bounded variances, Theorem~\ref{thm:rate-nonexpansive} also accommodates scenarios where  $\EE(||\tilde{T}(x^n,\xi)-Tx^n||)$ can grow in a controlled manner. This flexibility is important in the context of average reward MDPs, where rough estimates for the single-sample error are used to determine the minibatch size and derive overall oracle complexity bounds. The reader is referred to Remark~\ref{remark:non_unif_variance_average_Qlearning} for further comments. 
\begin{remark}
There is a simple reason behind the particular choice of $k_n$ in Corollary~\ref{cor:bounded_variance}. We want to find some $N(\varepsilon)$ such that \eqref{bound-expected-nonexpansive} is smaller than $\varepsilon$. Roughly speaking, if one assumes that $k_n= n^{a}$, then $\sigma_n=O(n^{-a/2})$ and $\sum_{n=1}^N n\sigma_n= O(N^{2-a/2})$, where we impose $a\geq 2$ in order to verify condition~\eqref{H2}. The overall oracle complexity is $\sum_{n=1}^N k_n =O(N^{1+a})$. 
For $a \geq 4$, the dominating term in \eqref{bound-expected-nonexpansive} is $\ln(N)/N$, implying that $N=\tilde O(\varepsilon^{-1})$, and leading then to a complexity of $\tilde O(\varepsilon^{-(1+a)})$. For $2 \leq a < 4$, the term $\sum_{n=1}^N n \sigma_n = \tilde{O}(N^{1-a/2})$ becomes dominant in \eqref{bound-expected-nonexpansive}. To achieve a precision of $\varepsilon$, we require $N = \tilde{O}(\varepsilon^{-2/(a-2)})$ with oracle complexity $\tilde{O}(\varepsilon^{-\frac{2(a+1)}{a-2}})$. In summary, for this type of minibatch sizes, the oracle complexity is $\tilde O(\varepsilon^{-\omega(a)})$ with
\[
\omega(a)= \begin{cases} \dfrac{2(1+a)}{a-2}& \text{  if  }\,\, 2< a\leq 4 \\[0.2cm] 1+a &   \text{  if  }\,\, a\geq 4,\end{cases}
\]
attaining its minimum at $a = 4$.
\end{remark}

\subsection{Lower bound}

In this section, we establish a lower bound on the number of queries required by any algorithm whose iterates \( x^n \) are generated as linear combinations of previous iterates and oracle evaluations of the nonexpansive map \( T \) at \( x^{n-1} \), potentially using minibatches. Our result applies to deterministic algorithms, meaning those that do not introduce additional randomness in their update steps \cite{nemirovskij1983problem}. That is, if the oracle were noiseless, the algorithm would produce a fully deterministic sequence. 

We denote by $\spann\left\{v^1, v^2,..., v^k\right\}$ the linear span of the vectors $\left\{v^1, v^2,..., v^k\right\} \subseteq \RR^d$. The construction of the operator is inspired by \cite{park2022exact}, while the stochastic oracle relies on the idea of resistant oracle, introduced in \cite{arjevani2023lower}.
\vspace{1ex}
\begin{theorem} \label{thm:lowerbound}
Let $\sigma, \bar \kappa >0$, $(k_n)_n\subseteq \NN$ a sequence of minibatch sizes, and $0 < \varepsilon < \frac{\sigma}{2}$ a given tolerance. Then, there exists a normed space $(\RR^d,\norm{\cdot})$ (with $d$ depending on $\varepsilon$ and $\bar \kappa$), an operator $T$ (depending on $\varepsilon$), a stochastic oracle $\tilde{T}$, an initial point $x^0\in \RR^d$ and a natural number $N = \Omega\left(\bar \kappa \sigma^2\varepsilon^{-3}\right)$ such that: 
\begin{itemize}
    \item $T$ is nonexpansive and has bounded range $\bar \kappa$, {\em i.e.} $\norm{Tx}\leq \bar \kappa$ for every $x \in \RR^d$,
    \item $\tilde{T}$ is unbiased and have uniformly bounded variance $\sigma^2$, 
    \item for any sequence $(x^n)_{n}$ such that
\begin{equation} \label{eq:span}
    x^n \in \spann\left\{x^0, x^1,..., x^{n-1}, \mbox{$\frac{1}{k_n}\sum_{j=1}^{k_n} \tilde{T}(x^{n-1}, \xi^{n}_j)$} \right\}, \quad \mbox{ for all } n \ge 1
\end{equation}
we have 
$$\EE(\norm{x^n-Tx^n})>\varepsilon, \quad \mbox{for every $n$ such that $\sum_{i=1}^n k_i \le N$}.$$ 
\end{itemize}
\end{theorem}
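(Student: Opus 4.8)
The plan is to build a hard instance living in a high-dimensional $\ell_\infty$-type space where the operator $T$ acts coordinatewise, so that the span constraint \eqref{eq:span} forces the algorithm to ``discover'' coordinates one at a time. Concretely, I would take $d$ large (polynomial in $\varepsilon^{-1}$ and $\bar\kappa$), endow $\RR^d$ with a norm that is essentially the $\ell_\infty$ norm (rescaled so that the bounded-range and variance constants come out as $\bar\kappa$ and $\sigma^2$), and define $T$ as a contraction-toward-zero-type map in each coordinate, e.g. $Tx = \tfrac12 x$ composed with a truncation at level $\bar\kappa$, or a shift/rotation map as in \cite{park2022exact}. The key point is that $\Fix T = \{0\}$ and the fixed-point residual $\norm{x-Tx}$ of any vector supported on a small set of coordinates is controlled from below by the ``mass'' the algorithm has failed to place correctly on the not-yet-explored coordinates.

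The second ingredient is the \emph{resistant stochastic oracle}, following the noise-construction philosophy of \cite{arjevani2023lower}. I would design $\tilde T(x,\xi)$ so that it is unbiased, has coordinatewise variance bounded by $\sigma^2$, but the randomness is arranged (via a hidden random permutation or random sign pattern on the coordinates) so that a single minibatched query $\frac{1}{k_n}\sum_j \tilde T(x^{n-1},\xi_{n,j})$ reveals ``new'' information about at most one additional coordinate, and only if the minibatch is large enough to beat the noise on that coordinate. The averaging over $k_n$ samples reduces the per-coordinate error to order $\sigma/\sqrt{k_n}$, so to identify a fresh coordinate to accuracy $\varepsilon$ the algorithm must spend $\Omega(\sigma^2/\varepsilon^2)$ queries on it. Since any iterate satisfying \eqref{eq:span} lies in the span of $x^0$ and the minibatch averages seen so far, after total budget $\sum_{i\le n} k_i \le N$ the iterate $x^n$ can be ``correct'' on at most roughly $N\varepsilon^2/\sigma^2$ coordinates, while there remain $\Omega(\bar\kappa/\varepsilon)$ coordinates (needed to make the residual exceed $\varepsilon$ in the $\ell_\infty$-type norm) on which it is still off by a definite amount; this forces $\EE(\norm{x^n-Tx^n})>\varepsilon$ and yields $N = \Omega(\bar\kappa\sigma^2\varepsilon^{-3})$.

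In terms of ordering the steps, I would: (1) fix the dimension $d = \Theta(\bar\kappa/\varepsilon)$ and the rescaled $\ell_\infty$-type norm, and verify the norm-equivalence constant $\mu$ is harmless; (2) define $T$ coordinatewise, check nonexpansiveness and $\norm{Tx}\le\bar\kappa$, and compute $\norm{x-Tx}$ as a function of the coordinates, establishing the lower bound ``residual $>\varepsilon$ unless $x$ is $\varepsilon$-accurate on essentially all coordinates''; (3) construct $\tilde T$ with the hidden-permutation noise, verify unbiasedness and the $\sigma^2$ variance bound, and prove the ``information per query'' lemma saying the conditional distribution of the unexplored coordinates stays uninformative until enough samples are drawn; (4) run an adversary/potential argument: track the (random) set of coordinates the algorithm could plausibly have learned, show it grows by at most one per $\Omega(\sigma^2/\varepsilon^2)$ queries in expectation, and conclude.

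The main obstacle is step (3)–(4): making the resistant-oracle argument rigorous against \emph{deterministic} algorithms that take arbitrary linear combinations. One must rule out that a clever linear combination of many noisy low-accuracy queries on \emph{different} coordinates could be assembled into a single accurate iterate — this is why the hidden permutation (so the algorithm does not know which coordinate a query informs about) and the $\ell_\infty$-type norm (so errors do not average down across coordinates) are both essential, and the bookkeeping that the span of all queries seen so far cannot contain a vector that is simultaneously $\varepsilon$-accurate on too many coordinates is the delicate combinatorial heart of the proof. I expect this to require a careful conditioning argument on the permutation together with an anti-concentration estimate for the minibatch averages.
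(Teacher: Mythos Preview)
Your overall counting is right ($d=\Theta(\bar\kappa/\varepsilon)$ coordinates, $\Theta(\sigma^2/\varepsilon^2)$ queries each), but two concrete choices break the construction. The operator $Tx=\tfrac12 x$ fails outright: $0$ is always in the span, so the algorithm outputs $x^n=0$ and has zero residual. You must commit to the shift map you mention from \cite{park2022exact}, namely $T=Q\circ P_C$ with $Q(x_1,\dots,x_d)=(\lambda-x_d,x_1,\dots,x_{d-1})$, $\lambda=2\varepsilon$, and $P_C$ the coordinatewise projection onto $[0,\lambda]$. But even with this operator, the $\ell_\infty$ norm does not deliver step~(2) of your plan: if $\prog(x)=m<d$, the algorithm can take $x_i=\lambda(1-\tfrac{i}{m+1})$ for $i\le m$ and $x_i=0$ for $i>m$, and then every coordinate of $x-Tx$ equals $\lambda/(m+1)$, so $\|x-Tx\|_\infty\to 0$ as $m$ grows. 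The paper works in the $\ell_1$ norm, under which the residual telescopes and one proves the key lemma $\|x-Tx\|_1\ge\lambda$ whenever $\prog(x)<d$, uniformly in $m$; the $\ell_1$ norm is also what makes the range bound $\|Tx\|_1\le d\lambda$ force $d=\lfloor\bar\kappa/\lambda\rfloor$, which is how $\bar\kappa$ enters the lower bound.

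Your hidden-permutation oracle and the anti-concentration ``delicate combinatorial heart'' are unnecessary and can be replaced by a purely support-based argument. The paper's oracle is exact on every coordinate except the single frontier $\prog(x)+1$, where it returns $\tfrac{\xi}{p}(Tx)_{\prog(x)+1}$ with $\xi\sim\mathrm{Bernoulli}(p)$, $p=\lambda^2/\sigma^2$. Starting from $x^0=0$, every oracle output seen so far is \emph{identically zero} on all coordinates beyond the current progress, so any element of the span in \eqref{eq:span} is too; there is nothing to estimate and no linear combination can manufacture a nonzero entry on an unrevealed coordinate. A minibatch of size $k_n$ advances $\prog$ by one with probability $1-(1-p)^{k_n}\le pk_n$, so after total budget $\sum_{i\le n}k_i\le N$ the expected progress is at most $pN$; Markov's inequality then gives $\PP(\prog(x^n)<d)>\tfrac12$ whenever $pN<d/2$, and combined with the $\ell_1$ residual lemma this yields $\EE\|x^n-Tx^n\|_1>\lambda/2=\varepsilon$ for $N=\Omega(d/p)=\Omega(\bar\kappa\sigma^2\varepsilon^{-3})$.
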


In order to prove the Theorem, let us first define $\lambda = 2\varepsilon$, and $d=\left\lfloor\frac{\bar \kappa}{\lambda}\right\rfloor$. We consider $\mathbb{R}^d$ endowed with the norm $\norm{x}_1 = \sum_{i=1}^d |x_i|$. Let the map $Q:\RR^d\to \RR^d$ be given by
$$Q(x_1, x_2,\dots, x_d) = (\lambda-x_d, \; x_1, x_2, \dots, x_{d-1}).$$

It is easy to check that $Q$ is nonexpansive (in fact, it is distance preserving). Moreover, $Q$ has a unique fixed-point $x^* = \left(\frac{\lambda}{2}, \; \frac{\lambda}{2}, \dots, \frac{\lambda}{2}\right)^\top \in \mathbb{R}^d$. 

Let us define $C = \{x\in \RR^d: \norm{x-x^*}_\infty\le \frac{\lambda}{2}\}$ the ball for the infinity norm centered on $x^*$ with radius $\lambda/2$. Let $P_C: \RR^d \to C$ be the function defined by $(P_C(x))_i = P_{[0, \lambda]}(x_i)$ for every $i=1,\dots,d$, where $P_{[0, \lambda]}$ is the one-dimensional projection onto the interval $[0, \lambda]$. Explicitly, $P_{[0, \lambda]}(t)$ equals $t$ if $t\in [0, \lambda]$, equals $0$ if $t<0$, and equals $\lambda$ if $t>\lambda$. 

Let $T:\RR^d\to \RR^d$ defined by $T = Q\circ P_C$. One can easily check that $T$ is nonexpansive for the norm $\norm{\cdot}_1$, and $\Fix T  = \{x^*\}$. Indeed, 
$$\norm{Tx-Ty}_1 = \norm{P_C(x)-P_C(y)}_1 = \sum_{i=1}^d |P_{[0,\lambda]}(x_i) - P_{[0, \lambda]}(y_i)| \le \sum_{i=1}^d |x_i-y_i|.$$
Here we have used the fact that $|P_{[0, \lambda]}(t)-P_{[0, \lambda]}(t')|\le |t-t'|$ for all $t\in \RR$.  

Furthermore, for every $x\in \RR^d$, we have $Tx\in C$. Indeed, let $y = P_C(x)$, then $y\in C$ and therefore
$$\norm{Tx-x^*}_\infty = \norm{Qy - x^*}_\infty = \norm{y-x^*}_\infty \le \frac{\lambda}{2}.$$
Using the triangular inequality we have 
$$\norm{Tx}_\infty\le \norm{Tx-x^*}_\infty+\norm{x^*}_\infty \le \lambda,$$
so $T$ has bounded range with $\norm{Tx}_1 \le d\lambda\leq \bar\kappa$. 

Now we will construct a stochastic oracle for $T$. For this, we define the {\it progress} of $x\in \RR^d$ similarly to \cite{arjevani2023lower},
\[
\prog(x) = \max\{i \in \{1,\ldots, d\}\;:\; |x_i|>0\}, \quad \text{and} \quad \prog(0_{\RR^d}) = 0. 
\] 
Let $\xi$ be a random variable with a Bernoulli distribution of parameter $p\in (0,1)$. We define the stochastic oracle $\tilde{T}(x,\xi)$ component-wise as
\[
(\tilde{T}(x,\xi))_i= 
\begin{cases}
(Tx)_i & \text{if} \quad i\neq \mbox{prog}(x)+1,\\[0.8ex]
\dfrac{\xi}{p}(Tx)_i  & \text{if} \quad i= \mbox{prog}(x)+1,
\end{cases}
\]
when $\mbox{prog}(x)<d$, and $\tilde{T}(x,\xi)= T(x)$ if $\mbox{prog}(x)=d$. It is easy to check that $\EE(\tilde{T}(x,\xi)) = Tx$. Now, if $\mbox{prog}(x)=d$, then $\EE\left(||\tilde{T}(x,\xi)-Tx||^2_2\right)=0$ and for all $x \in \RR^d$ with  $0\leq \mbox{prog}(x)<d$
$$\EE\left(||\tilde{T}(x,\xi)-Tx||^2_2\right) \leq ((Tx)_{\prog(x)+1})^2\EE\left(\left( 1-\frac{\xi}{p}\right)^2\right) \le  \lambda^2 \frac{1-p}{p}\leq  \frac{ \lambda^2}{p},$$
where we used that $ \norm{Tx}_{\infty}\le \lambda$. Now, taking $p = \frac{\lambda^2}{\sigma^2}$ we have that $\EE\left(||Tx-\tilde{T}(x,\xi)||^2_2\right)\le \sigma^2$ for all $x \in \RR^d$.  Finally, the following lemma will be relevant to prove the lower bound. We leave its proof to  Appendix~\ref{app:norm1bound}. 
\vspace{1ex}
\begin{lemma} \label{lemma:norm1bound}
 For all $x\in \RR^d$ such that $\prog(x) < d$, we have $\norm{x-Tx}_1\ge \lambda$.
\end{lemma}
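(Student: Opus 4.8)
The plan is to establish the lower bound $\norm{x - Tx}_1 \ge \lambda$ for every $x$ with $\prog(x) < d$ by a direct computation, exploiting the structure of $T = Q \circ P_C$ and the ``shift-plus-reflection'' nature of $Q$. Write $k = \prog(x)$, so that $x_i = 0$ for all $i > k$, and in particular $x_{k+1} = \dots = x_d = 0$. Set $y = P_C(x)$, which satisfies $y_i = P_{[0,\lambda]}(x_i)$; note $y_i = 0$ for $i > k$ as well, since $P_{[0,\lambda]}(0) = 0$. Then $Tx = Qy = (\lambda - y_d,\, y_1, \dots, y_{d-1})$.

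The key observation is that the first coordinate of $Tx$ equals $\lambda - y_d = \lambda - 0 = \lambda$ (using $k < d$, so $d > k$ forces $y_d = 0$), whereas the first coordinate of $x$ is $x_1 \in [-\infty, \infty)$ but, crucially, $(P_{[0,\lambda]})$ only enters through $y$, not $x$. So I cannot directly compare $x_1$ with $(Tx)_1$ yet. Instead I would argue componentwise over a carefully chosen block of indices. The cleanest route: consider the indices $1$ through $k+1$. For the coordinate $k+1$ we have $x_{k+1} = 0$, while $(Tx)_{k+1} = y_k = P_{[0,\lambda]}(x_k)$. For coordinates $2 \le i \le k+1$ we have $(Tx)_i = y_{i-1}$ and $x_i$ (for $i \le k$) is whatever it is; for coordinate $1$ we have $(Tx)_1 = \lambda$. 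I would bound
\[
\norm{x - Tx}_1 \ge |x_1 - \lambda| + \sum_{i=2}^{k+1} |x_i - y_{i-1}| \ge |x_1 - \lambda| + \sum_{i=2}^{k+1} \big(|x_i - x_{i-1}| - |x_{i-1} - y_{i-1}|\big),
\]
but this does not telescope cleanly because of the $P_C$ distortion terms. The better approach is to split on whether $x \in C$ or not.

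If $x \in C$, then $P_C(x) = x$, so $y = x$ and $Tx = Qx = (\lambda - x_d,\, x_1, \dots, x_{d-1})$. Since $\prog(x) < d$ means $x_d = 0$, we get $Tx = (\lambda,\, x_1, \dots, x_{d-1})$, hence
\[
\norm{x - Tx}_1 = |x_1 - \lambda| + \sum_{i=2}^{d} |x_i - x_{i-1}| \ge \Big|(x_1 - \lambda) + \sum_{i=2}^d (x_{i-1} - x_i)\Big| = |{-\lambda} + x_1 - x_d + \text{(telescoped)}|.
\]
The telescoping sum $\sum_{i=2}^d (x_{i-1} - x_i) = x_1 - x_d = x_1$ (again $x_d = 0$), so the whole thing collapses to $|x_1 - \lambda - x_1| = \lambda$; thus $\norm{x - Tx}_1 \ge \lambda$ in this case. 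If $x \notin C$, then $\norm{x - P_C(x)}_1 > 0$ measures how far $x$ sticks out of the box, and I would use $\norm{x - Tx}_1 \ge \norm{x - y}_1 + \norm{y - Qy}_1 - \text{(something)}$ — more carefully, apply the triangle inequality in the form $\norm{x - Tx}_1 \ge \norm{y - Ty'}_1 \dots$; actually the clean statement is that since $y = P_C(x)$ and $y_d = 0$ (as $\prog(y) \le \prog(x) < d$), the same telescoping argument applied to $y$ gives $\norm{y - Qy}_1 = \lambda$, and then $\norm{x - Tx}_1 = \norm{x - Qy}_1 \ge \norm{y - Qy}_1 - \norm{x - y}_1$ is the wrong direction. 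The right move: observe that projection onto the convex set $C$ is $1$-Lipschitz and that $x - Tx = (x - y) + (y - Qy)$, and bound $\norm{x-Tx}_1 \ge \norm{y - Qy}_1 - \norm{x-y}_1 = \lambda - \norm{x-y}_1$ only helps when $x$ is close to $C$.

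The main obstacle, and the part requiring care, is exactly the case $x \notin C$: the naive triangle inequality goes the wrong way. I expect the resolution is to use a coordinatewise argument that never needs $x \in C$: for each $i$, either $x_i \in [0,\lambda]$ (so $y_i = x_i$) or $x_i \notin [0,\lambda]$ (so $|x_i - y_i| = \dist(x_i, [0,\lambda]) > 0$ and moreover $y_i \in \{0, \lambda\}$). One then redoes the telescoping estimate $\norm{x - Tx}_1 \ge |{(Tx)_1 - x_1}| + \sum_{i=2}^{d}|{(Tx)_i - x_i}| = |\lambda - y_d - x_1| + \sum_{i=2}^d |y_{i-1} - x_i|$ and lower-bounds by collapsing a telescoping sum in the $y$-variables while absorbing each $|x_i - y_i|$ discrepancy with the correct sign; since $y_d = 0$, the reflection coordinate again contributes exactly $\lambda$ and all distortion terms are nonnegative, yielding $\norm{x-Tx}_1 \ge \lambda$. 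I would present this as a single clean inequality chain rather than splitting into cases, using $\sum_{i=1}^{d}(\pm)$ inside one absolute value via the triangle inequality $\sum |a_i| \ge |\sum a_i|$ with an alternating choice of signs chosen so that the $y$-terms telescope to $x_1 - y_d$ and combine with the lead term $\lambda - y_d - x_1$ to give $\pm\lambda$.
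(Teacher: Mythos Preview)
Your treatment of the case $x\in C$ is essentially right (despite a sign slip: with first term written as $\lambda-x_1$ rather than $x_1-\lambda$, the telescoping indeed gives $\lambda$). The genuine gap is the general case $x\notin C$, where your proposed resolution---apply $\sum|a_i|\ge\bigl|\sum\pm a_i\bigr|$ with signs ``chosen so that the $y$-terms telescope''---does not work. The summands are $|y_{i-1}-x_i|$ with $y_j=P_{[0,\lambda]}(x_j)$: there is no single sign pattern that turns this into a telescoping sum in the $y$-variables, because the $x_i$'s are stuck inside and do not cancel. Concretely, take $d=3$, $\lambda=1$, $x=(\tfrac12,\,2,\,0)$. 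Then $y=(\tfrac12,1,0)$, $Tx=(1,\tfrac12,1)$, and the three terms are $|1-\tfrac12|=\tfrac12$, $|\tfrac12-2|=\tfrac32$, $|1-0|=1$. Your suggested combination $(\lambda-x_1)+(y_1-x_2)+(y_2-x_3)$ evaluates to $\tfrac12-\tfrac32+1=0$, which is strictly below $\lambda$; other fixed sign patterns fail on other examples.

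What is actually needed is a \emph{pointwise} inequality, not a global triangle-inequality collapse: for every real $z,z'$,
\[
\bigl|P_{[0,\lambda]}(z)-z'\bigr|\;\ge\;P_{[0,\lambda]}(z)-P_{[0,\lambda]}(z').
\]
Applying this with $z=x_{i-1}$, $z'=x_i$ gives $|y_{i-1}-x_i|\ge y_{i-1}-y_i$ term by term, and \emph{then} the right-hand sides telescope cleanly: together with $|\lambda-x_1|\ge\lambda-y_1$ and the final term $y_k$ (from $x_{k+1}=0$) you obtain $(\lambda-y_1)+(y_1-y_k)+y_k=\lambda$. The displayed inequality requires a short case analysis on whether $z'<0$, $z'\in[0,\lambda]$, or $z'>\lambda$; this is precisely the content of the paper's proof (phrased there as an induction on the index via the auxiliary function $\eta(z,z')=|P_{[0,\lambda]}(z)-z'|+P_{[0,\lambda]}(z')-P_{[0,\lambda]}(z)\ge 0$). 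Your plan is missing exactly this step.
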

Now we have collected all the ingredients to complete the proof. 
\begin{proof}(of Theorem~\ref{thm:lowerbound})

Let $N\in \NN$ such that  $N< \frac{d}{2p} \le N+1$. Recalling that
\mbox{$\lambda= 2\varepsilon, \quad d=\left\lceil \frac{\bar \kappa}{2\lambda}\right \rceil=\left\lceil \frac{\bar \kappa}{4\varepsilon}\right \rceil, \,\,\text{and}\,\, p=\frac{\lambda^2}{\sigma^2},$}
we obtain that $N = \Omega(\kappa \sigma^2 \varepsilon^{-3})$. 

To conclude, we will prove that if $\sum_{i=1}^n k_i \le N$ then $\EE(\norm{x^n-Tx^n})> \varepsilon$. 

Let $x^0 = 0_{\mathbb{R}^d}$, so $\text{prog}(x^0) = 0$. We observe that the stochastic oracle unveils a new coordinate in iteration $n$ if one or more of the samples $\xi^{n}_1,\dots , \xi^{n}_{k_n}$ takes the value 1. Therefore, if $\text{prog}(x^{n-1})<d$, and $x^n$ were constructed according to \eqref{eq:span}, we have $\text{prog}(x^n) = \text{prog}(x^{n-1})+1$ with probability $p_n = 1-(1-p)^{k_n}$, and $\text{prog}(x^n) = \text{prog}(x^{n-1})$ with probability $1-p_n$. As a result, the random variable $\text{prog}(x^n)$ is the sum of $n$ independent Bernoulli random variables with parameters $p_1,\dots, p_n$. In particular, the expected value of $\text{prog}(x^n)$ is given by the sum of the probabilities. Now, using the Taylor expansion and Cauchy's remainder, one can verify that $(1-p)^{k_i} \ge 1-pk_i$ for every $k_i\ge 1$. Hence, we have the estimate
$$\sum_{i=1}^n p_i = \sum_{i=1}^n 1-(1-p)^{k_i} \le \sum_{i=1}^n pk_i \le  pN < \frac{d}{2}.$$

Using Markov's inequality, we have $\mathbb{P}(\mbox{prog}(x^n) < d)\ge 1-\frac{\sum_{i=1}^n p_i}{d} > \frac{1}{2}$. Now, using Lemma~\ref{lemma:norm1bound} and Markov's inequality for the random variable $\norm{x^n-Tx^n}_1$ we have
$$\frac{1}{2}< \mathbb{P}\left(\mbox{prog}(x^n) < d\right) \leq \mathbb{P}\left(\norm{x^n-Tx^n}_1 \ge \lambda \right) \le \frac{1}{\lambda}\EE(\norm{x^n-Tx^n}_1),$$
and therefore $\varepsilon < \EE(\norm{x^n-Tx^n}_1)$. 
\end{proof}

Our lower bound depends on $\varepsilon$, $\bar \kappa$, and $\sigma$, all of which are also involved in the upper bound \eqref{bound-expected-nonexpansive}. Additionally, our results apply directly to both the Halpern iteration with minibatch discussed in this paper and the stochastic Krasnoselskii-Mann (sKM) iteration discussed in \cite{bravo2024stochastic}, which does not use variance reduction. In particular, this result reveals a gap of $\tilde{O}(\varepsilon^{-3})$ compared to the upper bound $\tilde{O}(\varepsilon^{-6})$ established for (sKM), as well as a gap of $\tilde{O}(\varepsilon^{-2})$ compared to the rate $\tilde{O}(\varepsilon^{-5})$ of the stochastic Halpern iteration. This prompts the consideration of improving the analysis or exploring alternative approaches to bridge the gap.
\begin{remark} \label{rem:minibatch}
In \cite{cai2022stochastic}, a rate of $O(\varepsilon^{-3})$ was established for a modified stochastic Halpern iteration within the Euclidean framework, specifically for monotone inclusions. This result relies on additional assumptions on the oracle $\tilde T$, requiring both a multi-point structure and a Lipschitz in expectation condition. As a result, our lower bound does not directly apply to this specialized oracle, leaving the optimality of the approach in \cite{cai2022stochastic} an open question.
\end{remark}

\subsection{Convergence of the iterates} 
Even if we are mainly interested in establishing nonasymptotic bounds for the expected error term $\EE(\norm{x^n-Tx^n})$, it is interesting to write an almost sure convergence result for the Halpern iteration \eqref{halpern} for nonexpansive operators. The proof arguments are quite standard and rely on general results available for the noiseless iteration in Banach spaces (see \cite{reich1980strong}). We refer the reader to the technical details in Appendix~\ref{apx:convergence}.
\begin{theorem}\label{thm:convergence}
Let $(x^n)_n$ be the sequence generated by \eqref{halpern} where $T$ is nonexpansive,  and $\sum_{n=1}^\infty \sigma_n < +\infty$. Assume also that the sequence $(\beta_n)_n \subseteq (0,1)$ is non-decreasing and such that
\[
\sum_{j=1}^\infty (1-\beta_j)=+\infty, \quad \text{and} \quad \sum_{j=1}^n (\beta_{j}-\beta_{j-1}) <+\infty.
\]
Then
\begin{itemize}
\item[$(a)$] The fixed-point residual $\norm{x^n-Tx^n}$ goes to zero almost surely and any accumulation point of $(x^n)_n$ is a fixed-point of $T$.
\item[$(b)$] If $\left (\RR^d,\norm{\,\,\cdot\,\,} \right ) $ is a smooth space in the sense that the limit
\[
\lim_{\lambda \to 0} \dfrac{\norm{x+
\lambda\,y} -\norm{x}}{\lambda}
\]
exists for all $x,y$ in the unit sphere, then $(x^n)_n$ converges almost surely to some $x^* \in \Fix T$.
\end{itemize}
\end{theorem}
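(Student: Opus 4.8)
The plan is to strip out the randomness along almost every sample path and then lean on the classical strong-convergence theory for the \emph{noiseless} Halpern iteration in uniformly smooth Banach spaces. Since $\sum_{n\ge1}\sigma_n=\sum_{n\ge1}\EE(\norm{U_n})<+\infty$, monotone convergence gives $\EE\big(\sum_n\norm{U_n}\big)<+\infty$, so $\sum_n\norm{U_n}<+\infty$ almost surely. From now on I fix a sample point in this full-measure event, write $e_n:=U_n$, and regard $(x^n)_n$ as the solution of the \emph{deterministic} perturbed recursion $x^n=(1-\beta_n)x^0+\beta_n(Tx^{n-1}+e_n)$ with $\sum_n\norm{e_n}<+\infty$. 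The chain leading to \eqref{eq:bounded}, together with $B_i^n\le1$, shows that for any $x^*\in\Fix T$ one has $\norm{x^n-x^*}\le\norm{x^0-x^*}+\sum_{i\ge1}\norm{e_i}$; hence $(x^n)_n$ is bounded and, by nonexpansiveness, so is $\kappa_n:=\norm{Tx^n-x^0}$, say by a constant $\bar\kappa$.

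For part $(a)$ I would use directly the pathwise estimate \eqref{eq:residual}, which reads $\norm{x^n-Tx^n}\le\kappa_n(1-\beta_n)+\sum_{i=1}^{n}B_i^{n}c_i+\beta_n\norm{e_n}$ with $c_i:=\kappa_{i-1}(\beta_i-\beta_{i-1})+\beta_i\norm{e_i}+\beta_{i-1}\norm{e_{i-1}}$. One checks $\sum_i c_i<+\infty$, because $\sum_i(\beta_i-\beta_{i-1})=\lim_i\beta_i\le1$, $\sum_i\norm{e_i}<+\infty$, and $\kappa_{i-1}\le\bar\kappa$. On the other hand $\sum_j(1-\beta_j)=+\infty$ forces $\prod_{j\ge i}\beta_j=0$, so $B_i^{n}\to0$ as $n\to\infty$ for each fixed $i$, while $0\le B_i^n\le1$; a standard Toeplitz-type splitting (bound a tail of $\sum_i c_i$ below a prescribed threshold, then let $n\to\infty$ in the remaining finite sum) gives $\sum_{i=1}^nB_i^nc_i\to0$. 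Since also $\kappa_n(1-\beta_n)\to0$ and $\norm{e_n}\to0$, we obtain $\norm{x^n-Tx^n}\to0$ on this sample path. Finally, if $x^{n_k}\to\bar x$, then by continuity of $T$ and of the norm $\norm{\bar x-T\bar x}=\lim_k\norm{x^{n_k}-Tx^{n_k}}=0$, so $\bar x\in\Fix T$; this proves $(a)$ almost surely.

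For part $(b)$ smoothness enters. On the same sample path let $(y^n)_n$ be the exact Halpern iteration $y^n=(1-\beta_n)x^0+\beta_nTy^{n-1}$ with $y^0=x^0$. Subtracting the recursions and using nonexpansiveness, $\norm{x^n-y^n}\le\beta_n\norm{x^{n-1}-y^{n-1}}+\norm{e_n}$, whence $\norm{x^n-y^n}\le\sum_{i=1}^nB_{i+1}^n\norm{e_i}$, which tends to $0$ by exactly the Toeplitz argument above. So it suffices that $(y^n)_n$ converges. Because $\RR^d$ is finite-dimensional, a smooth norm is automatically uniformly smooth: the norm is $C^1$ off the origin and, the unit sphere being compact, its derivative is uniformly continuous there. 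Hence the classical strong-convergence theorem for the noiseless Halpern iteration in uniformly smooth (reflexive) Banach spaces applies under the stated stepsize conditions, yielding $y^n\to x^*:=Q(x^0)$, the image of the anchor under the unique sunny nonexpansive retraction $Q$ onto $\Fix T$ (see \cite{reich1980strong,xu2002iterative}). Therefore $x^n\to x^*$ along this sample path, and since the path was arbitrary in a full-measure event, $x^n\to x^*$ almost surely.

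The routine ingredients are the pathwise reduction and the two Toeplitz limits, which are elementary once one notes that $\prod_j\beta_j=0$. The analytic substance is outsourced to the cited noiseless strong-convergence results, so the points that genuinely need care are: (i) verifying that finite-dimensional smoothness upgrades to uniform smoothness, so those results are applicable; and (ii) controlling the accumulated perturbation uniformly in $n$ — which, again, is precisely what the vanishing of $\prod_j\beta_j$ delivers. An alternative to the comparison with $(y^n)_n$ is to invoke directly the perturbed (summable-error) version of the Halpern convergence theorem in \cite{xu2002iterative}.
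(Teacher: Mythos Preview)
Your argument is correct. Part $(a)$ matches the paper's proof essentially verbatim: both use monotone convergence to get $\sum_n\norm{U_n}<+\infty$ a.s., then work pathwise, bound $(x^n)_n$ via \eqref{eq:bounded}, and feed the resulting $\bar\kappa$ into \eqref{eq:residual}; the paper phrases the vanishing of $\sum_{i=1}^n B_i^n c_i$ via dominated convergence on $(\NN,\nu)$ with $\nu(\{i\})=c_i$, which is precisely your Toeplitz splitting.

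Part $(b)$ takes a genuinely different route. You introduce the auxiliary noiseless iteration $(y^n)_n$, show $\norm{x^n-y^n}\le\sum_{i=1}^nB_{i+1}^n\norm{e_i}\to0$ by the same Toeplitz argument, and then invoke Xu's noiseless Halpern theorem in uniformly smooth spaces as a black box (your remark that smoothness upgrades to uniform smoothness in finite dimensions is correct and needed here). The paper instead adapts Xu's proof directly to the perturbed recursion: it works with the approximants $z^t=(1-t)x^0+tTz^t$, uses the duality map $J$ and the subdifferential inequality to derive $\limsup_n\langle x^0-z^*,J(z^*-x^n)\rangle\le0$, and then obtains a recursion of the form $\norm{x^n-z^*}^2\le\beta_n\norm{x^{n-1}-z^*}^2+\delta_n+2(1-\beta_n)\langle x^0-z^*,J(x^n-z^*)\rangle$ with $\sum_n\delta_n<+\infty$, finishing via \cite[Lemma~2.5]{xu2002iterative}. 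Your approach is more modular and slightly shorter (the analytic work is outsourced), while the paper's is more self-contained and does not require isolating $(y^n)_n$. The ``alternative'' you mention at the end---applying the perturbed-error version of Xu's theorem directly---is in fact what the paper does.
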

\begin{remark}
Simple examples of smooth finite-dimensional Banach spaces covered by Theorem \ref{thm:convergence} are $\RR^d$ endowed with the $\ell^p$ norm $\norm{x}_p=(\sum_i |x_i|^p)^{1/p}$ for $1<p<\infty$.
\end{remark}
\section{The contracting case} \label{sec:contracting}

Let $T$ be a contraction with parameter $\gamma \in (0,1)$, where $x^*$ is the unique fixed-point of $T$. We are interested in the number of iterations and the overall oracle complexity required to obtain a solution $x^n$ such that $\EE(\norm{x^n-x^*}) \le \varepsilon$ for a given tolerance $\varepsilon>0$. 

Our recursive bound approach here is slightly different from the one in the nonexpansive case. We take increasing batch sizes that depend on the contraction parameter $\gamma$, allowing for a sensible decrease in the oracle complexity. The following result is the analogue of Proposition~\ref{prop:residual-nonexpansive} in the contracting setting. 

\begin{proposition}\label{prop:strong-contractive}
Let $T$ be a contraction with parameter $\gamma\in(0,1)$, $(\beta_n)_n\subseteq (0,1)$ an increasing sequence of stepsizes, and $(x^n)_{n\ge 1}$ be the sequence generated by \eqref{halpern}. Then, for every $n\geq 1$, the following estimate holds \vspace{-2ex}
\begin{equation}\label{general-strong-bound}
 \norm{x^n-x^*} \le \sum_{i=0}^n \gamma^{n-i}B_{i+1}^n((1-\beta_i)\norm{x^0-x^*} + \beta_i\norm{U_i}).
\end{equation}
\end{proposition}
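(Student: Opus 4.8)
The plan is to unroll the natural one-step recursion satisfied by $\norm{x^n-x^*}$. First I would use that $x^*$ is a fixed point of $T$ to rewrite \eqref{halpern} in a form adapted to measuring the distance to $x^*$: subtracting the identity $x^* = (1-\beta_n)x^* + \beta_n Tx^*$ from \eqref{halpern} gives
\begin{equation*}
x^n - x^* = (1-\beta_n)(x^0 - x^*) + \beta_n\bigl(Tx^{n-1} - Tx^*\bigr) + \beta_n U_n, \qquad n\ge 1.
\end{equation*}
Applying the triangle inequality together with the $\gamma$-contractivity of $T$ (the increasing hypothesis on $(\beta_n)_n$ is not needed here) yields the scalar recursion
\begin{equation*}
\norm{x^n - x^*} \le \gamma\beta_n \norm{x^{n-1} - x^*} + c_n, \qquad c_n := (1-\beta_n)\norm{x^0 - x^*} + \beta_n\norm{U_n}.
\end{equation*}

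Next I would iterate this inequality down to index $0$. Writing $a_n = \norm{x^n-x^*}$, a short induction on $n$ (the case $n=1$ is the displayed recursion itself, and the inductive step multiplies the bound for $a_{n-1}$ by $\gamma\beta_n$ and adds $c_n$) gives
\begin{equation*}
a_n \le \sum_{i=1}^n \Bigl(\prod_{j=i+1}^n \gamma\beta_j\Bigr) c_i + \Bigl(\prod_{j=1}^n \gamma\beta_j\Bigr) a_0 .
\end{equation*}
Using the conventions of \eqref{notation_beta_sigma}, namely $\beta_0 = 0$ and $B_{n+1}^n = 1$, one has $c_0 = \norm{x^0 - x^*} = a_0$, so the trailing term is precisely the $i=0$ summand; since $\prod_{j=i+1}^n \gamma\beta_j = \gamma^{n-i} B_{i+1}^n$ for every $0\le i\le n$, the bound collapses to
\begin{equation*}
\norm{x^n - x^*} \le \sum_{i=0}^n \gamma^{n-i} B_{i+1}^n\bigl((1-\beta_i)\norm{x^0 - x^*} + \beta_i\norm{U_i}\bigr),
\end{equation*}
which is exactly \eqref{general-strong-bound}.

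I do not expect any genuine obstacle: unlike the nonexpansive case of Proposition~\ref{prop:residual-nonexpansive}, where one must track the increments $\norm{x^n - x^{n-1}}$ and the residual $\norm{x^n-Tx^n}$ through a two-layer argument, here the contraction factor $\gamma<1$ makes the relevant inequality a single linear recursion directly in $\norm{x^n - x^*}$, so the only care required is the index bookkeeping around $i=0$ and the empty-product conventions. (Note in particular that the $i=0$ term carries the factor $\beta_0 = 0$ in front of $\norm{U_0}$, so the undefined quantity $U_0$ never actually enters the bound.)
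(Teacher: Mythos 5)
Your proof is correct and follows essentially the same route as the paper: subtract the fixed-point identity to get the one-step recursion $\norm{x^n-x^*}\le \gamma\beta_n\norm{x^{n-1}-x^*}+(1-\beta_n)\norm{x^0-x^*}+\beta_n\norm{U_n}$ and then unroll it by induction, absorbing the initial term as the $i=0$ summand via the conventions $\beta_0=0$ and $B_{n+1}^n=1$. Your observation that monotonicity of $(\beta_n)_n$ is not actually used is also accurate.
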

\vspace{-1ex}
\begin{proof}
We proceed by induction. For $n = 1$, recalling that $U_0=0$, one has that\vspace{-1ex}
\[
\begin{aligned}
   \norm{x^1\!-\!x^*}&= \norm{(1-\beta_1)(x^0-x^*)+\beta_1(Tx^{0}-x^*)+\beta_1 U_1}\\
   &\leq \beta_1 \gamma\norm{x^0-x^*}+ (1-\beta_1)\norm{x^0-x^*} + \beta_1\norm{U_1}.
\end{aligned}
\]
Now, assuming that it holds for $n-1$, we have 
\begin{equation*}
 \begin{aligned}
\norm{x^n\!-\!x^*} & = \norm{(1-\beta_n)(x^0-x^*)+\beta_n(Tx^{n-1}-x^*)+\beta_n U_n} \\
&\le  (1-\beta_n)\norm{x^0-x^*} + \gamma \beta_n  \norm{x^{n-1}-x^*}+ \beta_n\norm{U_n} \\
&\le  (1-\beta_n)\norm{x^0-x^*} + \beta_n\norm{U_n}  + \\
&\qquad \gamma \beta_n \left( \sum_{i=0}^{n-1} \gamma^{n-1-i}B_{i+1}^{n-1}((1-\beta_i)\norm{x^0-x^*} + \beta_i\norm{U_i})\right).
\end{aligned}   
\end{equation*}
\vspace{-0ex}The result follows by reordering and noting that $\beta_n B_{i+1}^{n-1}=B_{i+1}^{n}$ and $B_{n+1}^n=1$. 
\end{proof}
\vspace{-1ex}
The next result makes explicit bounds for the expected fixed-point residual in the case where $\beta_n=\frac{n}{n+1}$ while shedding light on the appropriate minibatch size in this case.
\vspace{-1ex}
\begin{theorem}\label{thm:contracting}
Let $N\in \NN$ be fixed and $x^N$ be the output generated by the sequence \eqref{halpern} with $\beta_n = \frac{n}{n+1}$. Assume that there exists some positive constant $\sigma^2$ such that
\[\sigma_n^2 \le \frac{\mu^2\sigma^2}{n^2\gamma^{N-n}}, \quad 1\leq n\leq N,
\]
where $\sigma_n= \EE(\norm{U_n})$. Then, \[\EE(\norm{x^N-x^*}) \leq \frac{4\max\{\norm{x^0-x^*},\mu\sigma\}}{(1-\gamma)(N+1)}.
\]
\end{theorem} 
\begin{proof}
Taking expectations in \eqref{general-strong-bound} and recalling that $B_i^n= \frac{i}{n+1}$ and $\sigma_0 = 0$, we have
\begin{equation} \label{contractive-bound}
\begin{aligned}
   \EE(\norm{x^N-x^*}) &\le \frac{1}{N+1}\sum_{i=0}^N \gamma^{N-i}(i+1)\left( \frac{1}{i+1}\norm{x^0-x^*} + \frac{i}{i+1}\sigma_i \right)  \\
   &\le \frac{1}{N+1}\left(\norm{x^0-x^*}\sum_{i=0}^N \gamma^{N-i} + \mu\sigma\sum_{i=1}^N \gamma^{\frac{N-i}{2}}\right) \\
   &\le \frac{1}{N+1}\left(\frac{\norm{x^0-x^*}}{1-\gamma} + \frac{\mu\sigma}{1-\sqrt{\gamma}}\right),
\end{aligned}
\end{equation}
where the first inequality follows from Proposition~\ref{prop:strong-contractive}. The second inequality uses the bound $\sigma_i \leq \sigma i \gamma^{-\frac{N-i}{2}}$. For the two terms in the third inequality, we apply the fact that $\sum_{i=0}^N \theta^{N-i} \leq \frac{1}{1-\theta}$ with $\theta = \gamma$ and $\theta = \theta^{1/2}$, respectively. The result follows by noticing that $\frac{1}{1-\sqrt{\gamma}} \le \frac{2}{1-\gamma}$ for all $\gamma \in (0,1)$ and $\norm{x^0-x^*}+ 2\mu\sigma\leq 4\max\{\norm{x^0-x^*},\mu\sigma\}$.
\end{proof}
The following corollary serves as the counterpart to Corollary~\ref{cor:bounded_variance} as it covers the special case of uniformly bounded variance for contractive operators. 
\vspace{-1ex}
\begin{corollary} \label{col:uniform-contractive}
Let $\varepsilon>0$ be given, assume that the stochastic oracle $\tilde{T}$ has uniformly bounded variance $\sigma^2$ and let $L=\max\{\norm{x^0-x^*},\mu\sigma \}$.  Define $N  = \left\lceil\frac{4L}{\varepsilon(1-\gamma)}\right\rceil$. Then Algorithm~\ref{alg_stochastic_halpern} with minibatches of size $k_n = \lceil n^2\gamma^{N-n}\rceil$ returns a solution $x^N$ such that $\EE(\norm{x^N-x^*})\le \varepsilon$ in at most $O\left(L^2 (1-\gamma)^{-3} \varepsilon^{-2}\right)$ queries to the stochastic oracle.  
\end{corollary}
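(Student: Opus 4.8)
The plan is to combine the minibatch variance bound \eqref{minibatch} with the explicit estimate of Theorem~\ref{thm:contracting} and then sum the batch sizes.

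First, since $\tilde{T}$ has uniformly bounded variance $\sigma^2$, the variance $e_n^2$ of a single query at $x^{n-1}$ satisfies $e_n^2\le \sigma^2$, so \eqref{minibatch} gives $\sigma_n^2\le \mu^2\sigma^2/k_n$. With the choice $k_n=\lceil n^2\gamma^{N-n}\rceil\ge n^2\gamma^{N-n}$ this yields $\sigma_n^2\le (\mu\sigma)^2/(n^2\gamma^{N-n})$, which is precisely the hypothesis of Theorem~\ref{thm:contracting} with the constant $\mu\sigma$ in place of $\sigma$. Applying that theorem gives
\[
\EE(\norm{x^N-x^*})\le \frac{\norm{x^0-x^*}+2\mu\sigma}{(1-\gamma)(N+1)},
\]
and since the ceiling in the definition of $N$ ensures $N\ge (\norm{x^0-x^*}+2\mu\sigma)/(\varepsilon(1-\gamma))$, the right-hand side is at most $\varepsilon$. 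This establishes the accuracy claim.

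For the oracle complexity, I would bound $\sum_{n=1}^N k_n\le \sum_{n=1}^N (n^2\gamma^{N-n}+1)=N+\sum_{n=1}^N n^2\gamma^{N-n}$. Crudely estimating $n^2\le N^2$ and summing the geometric series $\sum_{n=1}^N\gamma^{N-n}=\sum_{j=0}^{N-1}\gamma^j\le 1/(1-\gamma)$ gives $\sum_{n=1}^N k_n\le N+N^2/(1-\gamma)\le 2N^2/(1-\gamma)$. Finally, the ceiling in $N$ gives $N=O((\norm{x^0-x^*}+\mu\sigma)/(\varepsilon(1-\gamma)))$, hence $N^2=O((\norm{x^0-x^*}^2+\mu^2\sigma^2)/(\varepsilon^2(1-\gamma)^2))$ after using $(a+b)^2\le 2(a^2+b^2)$, and substituting yields the stated bound.

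There is no serious obstacle here; the statement is a corollary in the literal sense, so the work is bookkeeping. The only point that needs a little care is handling the ceilings: they are harmless for the lower bound $k_n\ge n^2\gamma^{N-n}$ needed to invoke Theorem~\ref{thm:contracting}, while the extra additive contribution they create in the query count (at most $N$ in total) is dominated by $N^2/(1-\gamma)$ and so does not affect the final rate. One could also remark that $k_n$ and $N$ depend on the unknown $x^*$ and $\norm{x^0-x^*}$, which is an implementation caveat but irrelevant to the complexity bound being asserted.
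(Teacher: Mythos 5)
Your proposal is correct and follows essentially the same route as the paper: bound $\sigma_n$ via the minibatch estimate \eqref{minibatch}, invoke Theorem~\ref{thm:contracting} with $\mu\sigma$ in place of $\sigma$, and sum the batch sizes using the geometric series $\sum_{n=1}^N\gamma^{N-n}\le (1-\gamma)^{-1}$. Your handling of the ceilings via $\lceil x\rceil\le x+1$ is in fact slightly more careful than the paper's use of $\lceil x\rceil\le 2x$ (which tacitly assumes $n^2\gamma^{N-n}\ge 1$), but this does not change the argument or the final bound.
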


\begin{proof}
The estimate \eqref{minibatch} guarantees that $\sigma_n = \EE(\norm{U_n}) \leq \mu \frac{\sigma}{\sqrt{k_n}}$. Therefore, for minibatches of size $k_n = \lceil n^2\gamma^{N-n} \rceil$, we have $\sigma_n \le \mu\sigma(n\gamma^{\frac{N-n}{2}})^{-1}$. Theorem~\ref{thm:contracting} implies that $\EE(\norm{x^N-x^*}) \le \varepsilon$. Now, as every iteration $n=1,...,N$ requires $k_n$ queries to the stochastic oracle, the precision is attained using at most\vspace{-1ex}
\[
\begin{aligned}
\sum_{n=1}^{ N} k_n \le  \sum_{n=1}^{N} ( n^2\gamma^{{N}-n}+1) \leq \frac{{ N}^2}{1-\gamma}+N.
\end{aligned}
\]
calls to the stochastic oracle.
\end{proof}
The overall oracle complexity described in the corollary depends on several factors: tolerance, contraction parameter, initial distance to the unique fixed point, and noise variance, which may not be readily known in advance. However, in the case of the Halpern $Q$-learning algorithm for discounted reward MDPs ({\em c.f.} Section~\ref{sec:discounted}), the key parameters $\norm{x^0-x^*}$  and $\mu\sigma^2$ can be estimated as functions of $(1-\gamma)$. As a result, we obtain a more precise estimate $\tilde O(\varepsilon^{-2}(1-\gamma)^{-5})$ in that setting ({\em c.f.} Theorem~\ref{teo_discounted}).

\section{Application to MDPs} \label{sec:MDP}
Consider an infinite-horizon Markov Decision Process (MDP) with state space $\S$ and action set $\A$, both assumed to be finite. Let $p(\cdot \,|\,s,a)\in\Delta(\S)$ be the probability distribution of the next state $s' \in \S$ when one chooses the action $a\in\A$ in state $s\in\S$. The function $r: \S \times \A \to [0,1]$ is the reward function, {\em i.e.} $r(s,a)$ is the reward obtained when the process is in state $s$ and the action $a$ is chosen and we denote $r_{\max}:=\max_{(s,a)\in \S\times\A} r(s,a)$. Let $\pi : \S \to \Delta(\A)$ be a possible randomized policy in the sense that $\pi(s)$ is the distribution of the action selected when the state is $s \in \S$. We say that a policy $\pi$ is deterministic if $\pi : \S \to \A$. Notice that the policies that we consider are stationary in the sense that they do not depend on time.
\subsection{Average Reward Halpern \texorpdfstring{$Q$}{Lg}-learning} 
\vspace{-0.5ex}

In this part, we consider the so-called  average reward setting, where one looks for a policy $\pi$ that maximizes the average reward
\begin{equation*} 
V_\pi(s)= \lim_{N \to \infty}\EE_\pi \left (\dfrac{1}{N}\sum_{k=0}^{N-1} r(s_k,a
_k) \,\Big |\, s_0=s\right ),\quad \text{for all} \, s \in \S,
\end{equation*}
where the trajectory $(s_k,a_k)_k$ is induced by $\pi$.  Throughout this section, we assume that the MDP at hand is {\em weakly communicating}, that is, 
\begin{equation}\label{eq:wcom} \tag{\mbox{\sc WCom}}
 \left\{
 \begin{aligned}
&\text{There is a set $\S'\subseteq \S$ such that for all $s,s'\in \S'$ there exists a}\\
&\text{policy for which $s'$ can be accessed from $s$. Also, every state in } \\
&\S \setminus \S' \text{ is transient under any policy.}\\
\end{aligned}\right.
\end{equation}
If there is no transient states, the MDP is said to be communicating. If any deterministic policy induces a unique recurrent class (with possible transient states), the MDP is unichain. Both communicating and unichain MDPs are weakly communicating \cite{puterman2014markov}. It is well-known that under \eqref{eq:wcom}, the optimal value $v^*=\max_{\pi}V_{\pi}(s)$ does not depend on the initial state and that there exists a determinitic policy $\pi^*$ achieving its value, $V_{\pi^*}(s)=v^*$ for all $s \in \S$. For every deterministic policy $\pi$, one can always define the bias vector $$h^\pi(s)= \displaystyle\mathop{\text{C-lim}}_{N\rightarrow \infty}\EE_\pi \left (\sum_{k=0}^{N-1} (r(s_k,a_k) - V_{\pi}(s))\Big |\, s_0=s \right ),\quad \forall \, s \in \S,$$
where $\displaystyle\mathop{\text{C-lim}}$ stands for the Ces\`aro limit. Then, from \cite[Chapter 9]{puterman2014markov}, one knows that the bias vector corresponding to an optimal policy $\pi^*$, $h^*:=h^{\pi^*}$, is a solution of the following Bellman (or optimal bias) equation  
\vspace{-1ex}
\begin{equation}   \label{average-value}
h(s)=\max_{a \in \A} \sum_{s' \in \S} p(s'|s,a)\left ( r(s,a) + h(s')\right ) - v^*,\quad \forall \, s \in \S,
\end{equation}
and that any solution $h$ allows to obtain an optimal policy by taking the action that attains the max at the right-hand-side in~\eqref{average-value}. Notice that if some vector $h \in \RR^{\S}$ is a solution of \eqref{average-value}, then $h + c \, \mathbf{u}$ also is for all $c \in \RR$, where $\mathbf{u} \in \RR$ is a vector of all-ones.

To address the fact that transitions are unknown, one approach is to define the so-called $Q$-factors as
\begin{equation*} Q(s,a) = \sum_{s' \in \S} p(s'|s,a) \left( r(s,a) + h(s') \right) , \quad \forall  (s,a) \in \S \times \A.
\end{equation*}
Hence, from equation~\eqref{average-value}, $h(s') = \max_{a'\in \A} Q(s',a')$ and then the equation can be rewritten as
\begin{equation}\tag{$Q_{\text{AV}}$}\label{q_fact_av} Q(s,a) = \sum_{s' \in \S} p(s'|s,a) \left( r(s,a) + \max_{a' \in \A} Q(s',a') \right) - v^*, \quad \forall\, (s,a) \in \S \times \A. \end{equation}
Additionally (\cite[Chapter 9]{puterman2014markov}), an optimal policy $\pi$ can be obtained as $\pi(s)=\argmax_{a \in A} Q(s,a)$ for any solution $Q$ of \eqref{q_fact_av}. Observe that, as with bias functions, if $Q$ is a solution, then $Q + c\, \mathbf{e}$ is also a solution for any $c \in \RR$, where $\mathbf{e} \in \RR^{\S \times \A}$ is a matrix of ones.

Naturally, solving \eqref{q_fact_av} reduces to finding a fixed point of the Bellman operator
$\H:\RR^{\S \times \A} \to \RR^{\S \times \A}$ defined as
\vspace{-1ex}
\begin{equation}\label{function_H}
  \H Q(s,a):=  \sum_{s' \in \S} p(s'|s,a)\left ( r(s,a) + \max_{a' \in \A} Q(s',a')\right )- v^*, \quad \forall\,(s,a) \in \S \times \A.
\end{equation}

It is well known that $\H$ is nonexpansive for the infinity norm $\norm{\,\cdot\,}_\infty$. Although, at first glance, the problem of computing the $Q$-factors fits within our framework, one must address the important issue that the optimal value $v^*$ is unknown. To address this, we adopt an approach inspired by the RVI-$Q$ learning procedure, which is based on a stochastic Krasnoselskii-Mann iteration and originally introduced in \cite{abounadi2001learning} for unichain MDPs, a stronger assumption than weakly communicating MDPs. Convergence guarantees and nonasymptotic bounds on the expected error for the RVI-$Q$ learning were later established in \cite{bravo2024stochastic} through the analysis of a related stochastic KM iteration and using similar ideas as the one decribed below.

Intuitively, the approach is to define a function  $f: \RR^{\S \times \A} \to \RR$ 
such that $f(Q^n)$ serves as an estimate of the optimal value $v^*$, preventing the state variable from diverging due to the noncompactness of the set $\operatorname{Fix} \mathcal{H}$. Specifically, $f$ is choosen such that for any $c \in \RR$ and $Q \in \RR^{\S \times \A}$,  $f(Q + c \,\mathbf{e}) = f(Q) + c$, where $\mathbf{e} \in \mathbb{R}^{\mathcal{S} \times \mathcal{A}}$ is a matrix filled with ones. Typical examples of such functions are $f(Q)= \max_{(s,a) \in \S\times\A}Q(s,a)$, $f(Q)= \min_{(s,a) \in \S\times\A}Q(s,a)$, $f(Q)= \frac{1}{\dim}\sum_{(s,a) \in \S\times\A}Q(s,a)$, and $f(Q)=Q(s_0,a_0)$ for some fixed $(s_0,a_0) \in \S\times\A$.

\begin{algorithm}[h]\label{alg_Q}
\caption{Halpern $Q$-learning (Average reward)}\label{alg_av}
\begin{algorithmic}[1]
\Require Sequence $(\beta_n)_n \in (0,1)$,  $(k_n)_n$ minibatch size sequence, initial point $Q^0$, function $f$ such that  $f(Q + c \,{\bf e})= f(Q) + c$.
\For{$n=1,\ldots, N$}
\State compute  
$$\hat \H_n Q^{n-1}(s,a) := r(s,a)+ \frac{1}{k_n} \sum_{i=1}^{k_n}  \max_{a'\in \A}Q^{n-1}(s_i^n(s,a),a') - f(Q^{n-1})$$ 
where $s_i^n(s,a) \sim p(\cdot | s,a)$ for every $(s,a) \in \S\times \A$.
\State set $Q^{n} =(1-\beta_n) Q^0 + \beta_n  \hat{\H}_n Q^{n-1}$
\EndFor
\State Return $Q^N$
\end{algorithmic}
\end{algorithm}

One challenge in analyzing Algorithm~\ref{alg_av} is that the underlying operator may not be nonexpansive due to the choice of $f$. Consequently, we consider an alternative approach, which we refer to as the Benchmark Halpern $Q$-learning Algorithm~\ref{alg_av_bench}, with state variable $Q_v^n$. It is important to emphasize that Algorithm~\ref{alg_av_bench} is intended solely for theoretical analysis and is not practically implementable, as it explicitly relies on the optimal value $v^*$. Notice that the use of $f$ does not affect the overall complexity since it is evaluated once at every iteration.

\begin{algorithm}[H]
\caption{Benchmark Halpern $Q$-learning (Average reward)}\label{alg_av_bench}
\begin{algorithmic}[1]
\Require Sequence $(\beta_n)_n \in (0,1)$, initial point $ Q^0$,  $(k_n)_n$ minibatch size sequence.
\For{$n=1,\ldots, N$}
\State compute  
$$\tilde \H_n Q^{n-1}_v(s,a) := r(s,a)+ \frac{1}{k_n} \sum_{i=1}^{k_n}  \max_{a'\in \A}Q^{n-1}_v(s_i^n(s,a),a') - v^*$$ 
where $s_i^n(s,a) \sim p(\cdot | s,a)$ for every $(s,a) \in \S\times \A$.
\State set $Q^{n}_v =(1-\beta_n) Q^0 + \beta_n  \tilde \H_n Q^{n-1}_v$
\EndFor
\State Return $Q^N$
\end{algorithmic}
\end{algorithm}

Let us consider the sequences $(Q^n)_n$ and $(Q^n_v)_n$ generated by Algorithms \ref{alg_av} and \ref{alg_av_bench}, respectively, using the same samples and initialized at a common point $Q^0$. The following Lemma is crucial for what follows.

\vspace{1ex}

\begin{lemma} \label{lemma:constant_diff_Qn_Qnv}
Let ${\bf e}$ be a matrix of size $|\S|\times|\A|$ full of ones. Then, there exists a sequence of real numbers $(c_n)_{n\geq 0}$ such that for every $n\ge 0$, $Q^n - Q^n_v = c_n{\bf e}$. Moreover, for every $n\ge 0$, $\EE (\norm{\H Q^n_v-Q^n_v}_\infty) = \EE (\norm{\H Q^n-Q^n}_\infty)$, with $\H$ defined in \eqref{function_H}. 
\end{lemma}
\begin{proof}
The case $n=0$ is true by definition. Assume by induction that $Q^{n-1} -Q^{n-1}_v=c_{n-1}{\bf e}$ for some $c_{n-1}\in \RR$. Now, for every $(s,a) \in \S \times \A$ and, from the induction hypothesis and the properties of the function $f$, we have that
\[
\begin{aligned}
  & Q^{n}(s,a)\! -\! Q^{n}_v(s,a) \!=\!\beta_n \left ( \hat{\H}_n Q^{n-1}(s,a) - \tilde \H_n Q^{n-1}_v(s,a)) \right ),\\
  &= \beta_n \left (\! \frac{1}{k_n} \sum_{i=1}^{k_n}  \max_{a'\in \A}Q^{n-1}(s_i^n(s,a),a') \!-\!f(Q^{n-1})\! -\! \frac{1}{k_n} \sum_{i=1}^{k_n}  \max_{a'\in \A}Q^{n-1}_v(s_i^n(s,a),a') + v^*  \!\right ),\\  
  &=\beta_n \left (\! \frac{1}{k_n} \sum_{i=1}^{k_n}  \max_{a'\in \A}(Q_v^{n-1}(s_i^n(s,a),a')\! + \!c_{n-1}) \!-\!f(Q_v^{n-1} +c_{n-1}{\mathbf e} ) 
  - \frac{1}{k_n} \sum_{i=1}^{k_n}  \max_{a'\in \A}Q^{n-1}_v(s_i^n(s,a),a') + v^*  \!\right )\\
   &= \beta_n \left (  c_{n-1} - f(Q_v^{n-1}) -c_{n-1} + v^*  \right )= \beta_n \left (v^* - f(Q_v^{n-1})  \right ):=c_n,
\end{aligned}
\]
which is independent of $(s,a)$. Finally, using that $\H$ satisfies that $\H(Q+ c \,{\mathbf e})= \H Q+ c \,{\mathbf e}$ for all $Q \in \RR^{\S \times \A}$ and $c\in \RR$, we have $\EE (\norm{\H Q^n\!-\!Q^n}_\infty) = \EE (\norm{\H(Q_v^{n}\! +\!c_{n}{\mathbf e})\!-\!(Q_v^{n} \!+\!c_{n}{\mathbf e} )}_\infty)=\EE (\norm{\H Q^n_v\!-\!Q^n_v}_\infty).$
\end{proof}
\vspace{-1ex}
The following result establishes nonasymptotic rates for $\EE(\norm{\H Q^n-Q^n}_\infty)$ in the case where $\beta_n=\frac{n}{n+1}$. As a consequence of Lemma~\ref{lemma:constant_diff_Qn_Qnv}, it is sufficient to obtain a convergence rate for the benchmark sequence $\EE (\norm{\H Q^n_v-Q^n_v}_\infty)$. Recall that $h^* \in \RR^{\S}$ stands for an optimal bias function for weakly communicating average reward MDPs.
\vspace{-1ex}
\begin{theorem} \label{teo_average}
Let  $\varepsilon>0$ and set $\beta_n=\frac{n}{n+1}$ and minibatch sizes $k_n=\lceil n^6\ln(n+1)\rceil$. Let us assume that $\hsp \geq 1$. Then the sequence $(Q^n)_n$ generated by Algorithm \ref{alg_av} with $Q^0=0$ produces an iterate $Q^N$ such that $\EE \left ( \norm{\H Q^N-Q^N}_\infty\right )\leq \varepsilon$ with a number of calls to the sampling device of at most  $\tilde O \left ( |\S \times \A|\, \norm{h^*}_{\text{sp}}^7\varepsilon^{-7}\right )$. 
\end{theorem}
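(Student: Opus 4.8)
The plan is to reduce the analysis of Algorithm~\ref{alg_av} to the benchmark Algorithm~\ref{alg_av_bench}, which we have already shown produces the same fixed-point residual, namely $\EE(\norm{\H Q^n - Q^n}_\infty) = \EE(\norm{\H Q^n_v - Q^n_v}_\infty)$. Since $\H$ is nonexpansive for $\norm{\cdot}_\infty$ and $\Fix\,\H \neq \emptyset$ (as $Q^* \in \Fix\,\H$), the benchmark sequence $(Q^n_v)_n$ is exactly an instance of the stochastic Halpern iteration \eqref{halpern} with operator $T = \H$, stepsize $\beta_n = \frac{n}{n+1}$, and error term $U_n = \tilde\H_n Q^{n-1}_v - \H Q^{n-1}_v = \frac{1}{k_n}\sum_{i=1}^{k_n}\big(\max_{a'}Q^{n-1}_v(s_i(s,a),a')\big)_{s,a} - \big(\sum_{s'}p(s'|s,a)\max_{a'}Q^{n-1}_v(s',a')\big)_{s,a}$, which is a minibatch average of $k_n = n^6$ independent unbiased samples. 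So the goal is to invoke Theorem~\ref{thm:rate-nonexpansive}, for which I must (i) verify the summability hypothesis $\sum_{n=1}^N n\sigma_n = \tilde O(1)$, and (ii) count the calls to the sampling device.

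The key step is bounding $e_n^2$, the variance of a single-sample error term, since the operator here does \emph{not} have an a priori uniformly bounded range (the state variable $Q^n_v$ drifts because of the $-v^*$ term, and no uniform bound on it is known in advance). So instead of invoking uniformly bounded variance directly, I would first establish a per-iteration bound on $\EE(\norm{Q^{n-1}_v - x^*}_\infty)$, or rather on $\EE(\norm{Q^{n-1}_v - Q^*}_\infty)$ up to the additive-constant ambiguity; using the inequality \eqref{eq:bounded} from the proof of Lemma~1.2(ii), one has $\norm{Q^n_v - Q^*}_\infty \le \norm{Q^0 - Q^*}_\infty + \sum_{i=1}^n B^n_i\norm{U_i}_\infty$, which after taking expectations gives a bound growing at most polynomially (in fact boundedly, once we close the loop) in $n$. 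The single-sample error at $(s,a)$ is $\frac{\xi}{p}$-type bounded by the range of the coordinates of $Q^{n-1}_v$, so $e_n^2 = O(\EE(\norm{Q^{n-1}_v}_\infty^2))$, and by \eqref{minibatch}, $\sigma_n^2 \le \mu^2 e_n^2/k_n$ with $\mu = \sqrt{|\S\times\A|}$ the $\norm{\cdot}_2$-to-$\norm{\cdot}_\infty$ constant on $\RR^{\S\times\A}$. With $k_n = n^6$ this should give $\sigma_n = \tilde O(|\S\times\A|^{1/2} n^{-3} \cdot \mathrm{poly}(n)^{?})$; the exponent $6$ is chosen precisely so that even allowing for the extra polynomial growth of $\EE(\norm{Q^{n-1}_v}_\infty)$ coming from accumulated noise, one still gets $\sum_{n} n\sigma_n = \tilde O(1)$ and hence \eqref{H3} holds and Theorem~\ref{thm:rate-nonexpansive} applies, yielding $N = \tilde O(\varepsilon^{-1})$ iterations.

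The main obstacle is exactly this bootstrapping: because $v^*$ appears but the iterates are not a priori bounded, the variance bound $e_n^2$ depends on the iterates themselves, so one cannot simply quote "uniformly bounded variance." I would resolve it by a self-referential argument — assume a polynomial growth bound $\EE(\norm{Q^{n}_v}_\infty) = O(\mathrm{poly}(n))$, feed it into \eqref{minibatch} to get $\sigma_n$, feed that into \eqref{eq:bounded} to re-derive the growth bound, and check consistency; since $r$ is bounded in $[0,1]$ and $|v^*|\le 1$, the "deterministic part" of the drift per step is $O(1)$, and the stochastic part contributes $\sum B^n_i \sigma_i$, which is summable once $k_n = n^6$, so the bound closes to $\EE(\norm{Q^n_v}_\infty) = O(1)$ (or at worst $\tilde O(1)$), wiping out the feared polynomial blow-up. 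Once the iteration count $N = \tilde O(\varepsilon^{-1})$ is in hand, the total number of single-component sampling calls is $|\S\times\A|\sum_{n=1}^N k_n = |\S\times\A|\sum_{n=1}^N n^6 = \tilde O(|\S\times\A| N^7) = \tilde O(|\S\times\A|\,\varepsilon^{-7})$, which is the claimed bound.
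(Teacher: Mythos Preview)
Your high-level strategy is right and matches the paper: reduce to the benchmark iteration, control $\sigma_n$ so that Theorem~\ref{thm:rate-nonexpansive} gives $N=\tilde O(\varepsilon^{-1})$, then count $|\S\times\A|\sum_{n=1}^N n^6=\tilde O(|\S\times\A|\varepsilon^{-7})$ samples. However, two technical ingredients diverge from the paper and, as sketched, do not close.

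First, the growth of the iterates. Your self-referential argument via \eqref{eq:bounded} aims at $\EE(\norm{Q^n_v}_\infty)=O(1)$, but bounding $\sigma_i$ through \eqref{minibatch} needs control of $\EE(\norm{Q^{i-1}_v}_\infty^2)$ (a second moment), which your bootstrap never provides; the circularity is not resolved. The paper instead extracts a \emph{deterministic} pathwise bound directly from the recursion: since $|\tilde\H_n Q^{n-1}_v(s,a)|\le (r_{\max}-v^*)+\norm{Q^{n-1}_v}_\infty$, one gets $\norm{Q^n_v}_\infty\le(1-\beta_n)\norm{Q^0}_\infty+\beta_n(g+\norm{Q^{n-1}_v}_\infty)$ and hence $\norm{Q^n_v}_\infty\le\norm{Q^0}_\infty+\tfrac{n}{2}g$, i.e.\ \emph{linear} growth in $n$. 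This linear growth is exactly why $k_n=n^6$ (rather than $n^4$ as in Corollary~\ref{remark-boundedvariance}) is needed and why the exponent is $7$ rather than $5$; your $O(1)$ claim, if it held, would prove a strictly stronger theorem.

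Second, the variance bound. You invoke \eqref{minibatch}, which routes through the Euclidean norm; the single-sample error has $|\S\times\A|$ coordinates each bounded by $2\norm{Q^{n-1}_v}_\infty$, so $e_n^2\le 4|\S\times\A|\,\norm{Q^{n-1}_v}_\infty^2$ and $\sigma_n$ carries an unavoidable $\sqrt{|\S\times\A|}$ factor (note also that for $\norm{\cdot}_\infty$ one has $\mu=1$, not $\sqrt{|\S\times\A|}$; the dimension enters through $e_n$). Propagated through \eqref{bound-expected-nonexpansive} this forces $N=\tilde O(\sqrt{|\S\times\A|}\,\varepsilon^{-1})$ and a total of $\tilde O(|\S\times\A|^{9/2}\varepsilon^{-7})$ samples, not the stated bound. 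The paper avoids this by applying Hoeffding's inequality coordinatewise (with a union bound over $|\S\times\A|$ coordinates) together with the pathwise bound above: splitting on a high-probability event and its complement with $\delta=k_n^{-1/2}$ yields $\sigma_n\le \tilde O\big(\norm{Q^{n-1}_v}_\infty/\sqrt{k_n}\big)$ with only a $\sqrt{\log|\S\times\A|}$ factor, which is absorbed into $\tilde O$. Combining $\norm{Q^{n-1}_v}_\infty=O(n)$ with $k_n=n^6$ then gives $n\sigma_n=\tilde O(n^{-1})$ and $\sum_n n\sigma_n=\tilde O(1)$ with the correct $|\S\times\A|$ dependence.
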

\vspace{-1ex}
The proof of Theorem~\ref{teo_average}, which is deferred to Appendix~\ref{app:average}, essentially relies on the estimations obtained earlier for general nonexpansive maps, along with the application of classical concentration inequalities to analyze Algorithm~\ref{alg_av_bench}, and concludes through Lemma~\ref{lemma:constant_diff_Qn_Qnv}. We include a logarithmic in $k_n$ in order to precisely control the terms that appear when using concentration inequalities.

Let us briefly discuss why we set $Q^0 = 0$ and assume that $\hsp \geq 1$. Regarding the first point, when working with an arbitrary initial condition $Q^0$, the bounds we derive depend essentially on the term $\max\{\norm{Q^0}_\infty, r_{\max}\}$, where $r_{\max} \leq 1$. Therefore, there is an incentive to choose an initial condition with a small infinity norm. Regarding the second point, we recall that our complexity bounds also depend on $\dist(Q^0, \Fix \H) = \dist(0, \Fix \H)\leq r_{\max}+\hsp$ (see Appendix~\ref{app:average}). Therefore, since our bound depends on some linear combination of $r_{\max}$ and $\hsp$, assuming $\hsp \geq 1$ highlights, in a sense, the most relevant part of the sample complexity and allows us to compare our results to the existing literature.
\vspace{1ex}

\begin{remark} \label{remark:non_unif_variance_average_Qlearning}
It is well known that the oracle used in the sampling process of Algorithm~\ref{alg_av}, $\tilde H(\cdot, \cdot)$, has a variance that can grow quadratically with respect to the norm of $Q$. To illustrate this, consider a trivial MDP where $\S = \{s_1, s_2\}$, $\A = \{a\}$, and all transition probabilities are equal to $1/2$. If $Q = (x,-x)$ with $x \in \mathbb{R}$, then $\H Q=(0,0)$ and $\mathbb{E} (\| \tilde{\H}(Q,\cdot) - \H Q \|_2^2) = x^2$, showing that the variance is unbounded.  The key property that enables Theorem~\ref{teo_average} is that $\mathbb{E} (\| \tilde{\H}(Q^n_v,\cdot) - \H Q^n_v \|_\infty)$ can be controlled as $C \| Q^n_v \|_\infty$ at the iteration, rather than over the entire space. This highlights the flexibility of our general result, derived from bounds obtained in the noiseless case and appropriate minibatching. 
\end{remark}

The following result show how the sample complexity established in Theorem~\ref{teo_average} can be also obtained in the form of PAC  (Probably Approximately Correct) bounds to guaranteed small errors with arbitrarily high probability. The proof follows almost exactly the same ideas and then we provide a sketch of the main argument in Appendix~\ref{app:high_proba}. 

\begin{proposition}\label{prop:high_proba}
Let us assume that $\norm{h^*}_{\text{sp}}\geq 1$, take $Q^0=0$, set a small precision $\varepsilon>0$, and $\delta \in (0,1)$. Then, for $\beta_n=\frac{n}{n+1}$, $k_n=\lceil n^6\ln(n+1) \rceil$, Algorithm \ref{alg_av} finds $Q^N$ such that 
$$\norm{\H Q^N-Q^N}_\infty \leq \varepsilon, \quad  \text{with probability at least} \quad 1-\delta,$$   with an overall sampling complexity of at most  $\tilde O \left ( |\S \times \A|\, \norm{h^*}_{\text{sp}}^7\varepsilon^{-7}\right )$, where the complexity bound contains also terms of $\ln(1/\delta)$.
\end{proposition}

\vspace{1ex}

Corollary~\ref{cor:policy_error} shows how our results can be translated to obtain $\varepsilon$-optimal policies which, as we discussed in Section~\ref{sec:related}.
\begin{corollary}\label{cor:policy_error}
Let us assume that $\norm{h^*}_{\text{sp}}\geq 1$, take $Q^0=0$, and set precision $\varepsilon>0$. Then, for $\beta_n=\frac{n}{n+1}$ and $k_n=\lceil n^6\ln(n+1) \rceil$, Algorithm \ref{alg_av} finds $Q^N$ such that the induced policy $ \pi^N:\S \to \A$, $\pi^N(s):= \argmax_{a \in \A} Q^N(s,a)$, $s \in \S$, verifies
\[
\EE( v^*-V_{\pi^N}(s) ) \leq \varepsilon, \quad \text{for all}\, s \in \S,
\]
after at most $\tilde O(|\S\times \A \norm{h^*}^7_{\text{sp}} \varepsilon^{-7})$ calls to the sampling device. Also, for $\delta \in (0,1)$, the policy $\pi^N$ is such that for all $s \in \S$ \vspace{-1ex}
\[
v^*-V_{\pi^N}(s) \leq \varepsilon, \quad \text{with probability at least}\, 1-\delta, \vspace{-1ex}
\]
with the same order of sample complexity, including terms depending on  $\ln(1/\delta)$.
\end{corollary}
\begin{proof}
The property that links the Bellman error to the induced policy error, with a proof available in \cite{lbc25}, is as follows. For any weakly communicating average reward MDP and for every \( Q \in \mathbb{R}^{\mathcal{S} \times \mathcal{A}} \), the policy defined by $\pi(s) = \arg\max Q(s,a), \quad \forall s \in \mathcal{S}$, satisfies the bound  
\[
v^* - V_\pi(s) \leq \|\mathcal{H}(Q) - Q\|_{\text{sp}}, \quad \forall s \in \mathcal{S}.
\]  
Using thar $ \norm{\H(Q)-Q}_{\text{sp}}\leq 2 \norm{\H(Q)-Q}_{\infty}$ and running Algorithm~\ref{alg_av} allows us to conclude after the application of Theorem~\ref{teo_average} and Proposition~\ref{prop:high_proba}.
\end{proof}

\vspace{-2ex}
    
\subsection{Discounted Halpern \texorpdfstring{$Q$}{Lg}-learning} \label{sec:discounted}
In this part, we consider the so-called {\em discounted} setting, where the reward for a policy $\pi$ is given by \vspace{-2ex}
\[
V_\pi(s)= \EE_\pi \left (\sum_{k=0}^\infty \gamma^k r(s_k,a_k) \,\Big |\, s_0=s\right ),\quad \text{for all} \, s \in \S,
\]
with $\gamma\in (0,1)$. The optimal value function $V^*:\S \to \RR$ is defined as $V^*(s)=\max_\pi V_\pi(s)$. It is well-known that there exists an optimal deterministic policy $\pi^*$ such that $V^*=V_{\pi^*}$ and the optimal value function $V^*$ is the unique solution to the Bellman equations
\[ 
V(s)=\max_{a \in \,A} \sum_{s' \in \S} p(s'|s,a)\left ( r(s,a) + \gamma V(s')\right ),\quad \forall \, s \in \S.
\]
The $Q$-factors in this case are defined as
$Q(s,a)= \sum_{s' \in \S} p(s'|s,a)\left ( r(s,a) + \gamma V(s')\right ),
$
and then the corresponding Bellman equation in the variable $Q$ is
\begin{equation}\label{op_cont} \tag{$Q_{\gamma}$}
Q(s,a)=\sum_{s' \in \S} p(s'|s,a)\left ( r(s,a) + \gamma \max_{a' \in \A} Q(s',a')\right ),\quad \forall \, (s,a) \in \S\times \A.
\end{equation}
In the discounted reward setting, the unique optimal value function 
 $Q^*(s,a)$ is the maximum expected reward when starting from state $s$ and taking action $a$. Any optimal policy can be obtained by selecting actions according to $\pi^*(s)=\argmax_{a \in \A}\, Q^*(s,a)$ for all $s \in \S$. For this reason, estimating $Q^*$ has been a central problem in this field. Naturally, $Q^*$ is the unique solution of $\T Q=Q$, where $\T:\RR^{\S \times \A} \to \RR^{\S \times \A}$ is the  $\gamma$-contracting  Bellman operator defined by \eqref{op_cont}.
 
 Algorithm \ref{alg_dis} is the instance of the stochastic Halpern Algorithm~\ref{alg_stochastic_halpern} in this context.

\begin{algorithm}[H]
\caption{Halpern $Q$-learning (discounted case)}\label{alg_dis}
\begin{algorithmic}[1]
\Require Sequence $(\beta_n)_n \in (0,1)$,  $(k_n)_n$ minibatch size sequence, initial point $Q^0$, discount factor 
$\gamma$, number of iterations $N$.
\For{$n=1,\ldots, N$}
\State compute  
$$\tilde \T_n Q^{n-1}(s,a) := r(s,a)+ \gamma\frac{1}{k_n} \sum_{i=1}^{k_n} \max_{a'\in \A}Q^{n-1}(s_i^n(s,a),a')$$ 
where $s_i^n(s,a) \sim p(\cdot | s,a)$ for every $(s,a) \in \S\times \A$.
\State set $Q^{n} =(1-\beta_n) Q^0 + \beta_n  \tilde \T_n Q^{n-1}$
\EndFor
\State Return $Q^N$ 
\end{algorithmic}
\end{algorithm}

Observe that the oracle involved in Algorithm~\ref{alg_dis} has uniformly bounded variance, allowing us to apply Corollary~\ref{col:uniform-contractive}. We know that $ \mu = 1$ and that $ \|Q^0 - Q^*\|_\infty \leq C (1-\gamma)^{-1} $ (see Appendix~\ref{app:discounted}). Roughly speaking, when bounding the variance in the Euclidean space, we get $\sigma^2\leq C\norm{Q}_2^2\leq C \dim \norm{Q}_\infty^2$. This leads to an estimate of the form $\sigma^2 \leq C |\mathcal{S} \times \mathcal{A}| (1-\gamma)^{-2}$. Therefore the constant $L$ involved in Corollary~\ref{col:uniform-contractive} is such that $L=O(\sqrt{\dim}(1-\gamma)^{-1})$ leading to an overall sample complexity of  
\[
O\left(\frac{|\mathcal{S} \times \mathcal{A}|^{2} r_{\max}^2}{\varepsilon^2 (1-\gamma)^5} \right),
\]  
which is worse than the rate stated in Theorem~\ref{teo_discounted} below. Consequently, we need more precise estimations to obtain the correct dependence on the dimension $ \dim $, up to logarithmic terms. The proof of the following theorem, similar to the one of Theorem~\ref{teo_average}, is presented in Appendix~\ref{app:discounted}. We assume that $\varepsilon>0$ is sufficiently small.
\vspace{-1ex}
\begin{theorem} \label{teo_discounted}
Let $\varepsilon >0$ and set $Q^0$ such that $\norm{Q^0}_\infty\leq \frac{r_{\max}}{1-\gamma}$. For $M>0$ defined in \eqref{bound_M}, let  $\rho= M\frac{r_{\max}}{(1-\gamma)^2}$, and $N=\left \lceil 2 \frac{\rho} {\varepsilon} \ln \left ( \frac{\rho}{\varepsilon} \right) \right \rceil$. Then, Algorithm \ref{alg_dis} with $\beta_n = \frac{n}{n+1}$, and $k_n=\lceil n^2\gamma^{N-n} \rceil$, produces an iterate $Q^N$ such that
\[
\EE \left ( \norm{Q^N-Q^*}_\infty\right )\leq \varepsilon,
\]
 using at most $\tilde{O}\left(\frac{|\S\times\A|r_{\max}^2}{\varepsilon^2(1-\gamma)^5} \right)$ queries to the sampling device. Also, for any $\delta \in (0,1)$, 
\[
\norm{Q^N-Q^*}_\infty\leq \varepsilon, \,\,\text{with probability at least }\,\, 1-\delta
\]
with the same complexity, where the bound contains also  terms of $\ln(1/\delta)$.
\end{theorem}
\vspace{-1ex}

\begin{remark}\label{rem:monotone}
Some final comments are in order.\hfill
\begin{itemize} 
\item The constant $M$ in Theorem~\ref{teo_discounted} is such that $M\leq \frac{2}{\ln(2)}(1+ \sqrt{8\ln(8\dim)})$.
\item The result in Theorem~\ref{teo_discounted} is essentially the same as the one obtained \cite{wainwright2019stochastic}, while analyzing the classical $Q$-leaning algorithm. This algorithm can be seen as a stochastic Krasnoselskii--Mann iteration where one sample by iteration is performed. Interestingly, our analysis does not require the monotonicity of the Bellman operator, specifically, the condition \( Q \le \bar{Q} \Rightarrow \mathcal{T}Q \le \mathcal{T}\bar{Q} \) component-wise on which the argument of \cite{wainwright2019variance} relies. Also, we provide the precise number of iterations needed to achieve the prescribed precision. As mentioned, the optimal complexity for approximate $Q$-values was later achieved through variance reduction \cite{wainwright2019variance} applied to the $Q$-learning algorithm.

\item In general, it is possible to derive an $\varepsilon$-optimal policy from an $\varepsilon$-optimal $Q$. It follows from the fact that if $\norm{Q-Q^*}_\infty \leq \varepsilon$, then 
\[
V^*(s)-V_\pi(s)\leq 2\frac{\varepsilon}{1-\gamma},
\]
where $\pi$ is the policy defined by $\pi(s)=\argmax_{a \in A}Q(s,a)$ (see for instance \cite[Corollary 2]{singh1994upper}). Then if one runs our algorithm until tolerance $\varepsilon(1-\gamma)/2$, the policy induced by the output will be $\varepsilon$-optimal. Of course, this affects the complexity by adding a factor $4(1-\gamma)^{-2}$.
\end{itemize}
\end{remark}

\noindent {\bf Acknowledgements.} We sincerely thank two anonymous referees for their insightful comments, which significantly improved the quality of this paper. Mario Bravo gratefully acknowledges the support by FONDECYT grants 1191924 and 1241805 and the Anillo grant ACT210005. Juan Pablo Contreras was supported by Postdoctoral Fondecyt Grant 3240505.

\vspace{3ex}

\bibliographystyle{ims}

\newpage
\appendix
\section{Deferred proofs} \label{app:proofs}

\subsection{Proof of Lemma~\ref{lem:sigma_explicito}} \label{apx:explicito}
Assuming that $\sum_{n=1}^N n\sigma_n \leq C \ln(N+1)$, the bound \eqref{bound-expected-nonexpansive} implies that
\begin{equation*} 
\EE(\norm{ x^N-Tx^N}) \le \frac{1}{N+1}\left( 3\bar{\kappa}+2C\right)\ln(N+1) \le \underbrace{(3\bar{\kappa} + 2C)}_{\leq\rho}\frac{\ln(N+1)}{N+1}, \end{equation*}
where we have used the rough bound $3\ln(N+1)\geq \sum_{n=1}^{N+1}\frac{1}{n}$ for all $N \geq 1$, in order to keep the estimates simple at the expense of some small factor.

For $\varepsilon>0$, let us call $\tau:=\frac{\varepsilon}{\rho}>0$. It is sufficient to find some $\bar N\in \NN$ such that, for any $N \geq \overline{N}$,
\[
\frac{\ln(N+1)}{N+1} \leq \tau, \quad \text{or, equivalently,}\,\, -\tau (N+1)e^{-\tau(N+1)}\geq -\tau.
\]
First, note that if $\tau \geq e^{-1}$ then the inequality above holds for all $N \geq 0$. We assume in what follows that $\tau < e^{-1}$.

It is well-known that if we want to solve the equation $-\tau (x+1)e^{-\tau(x+1)}= -\tau$, the solution is given by the relation $-\tau(x+1) = W(-\tau)$, where $W$ is the Lambert function. Given that $\tau >0$, there is two solutions of this equation, provided by the upper and lower branches of $W$, $W_0$ and $W_{-1}$, respectively. We recall that both functions are continuous, defined in the interval $[-e^{-1},0]$, and such that $W_0(-e^{-1})=W_{-1}(-e^{-1})=-1$, $W_0$ increases to cero and $W_{-1}$ decreases to $-\infty$. This implies 
that for all $\tau \in (0,e^{-1})$, $W_0(-\tau) \in (-1,0)$ and that $W_{-1}(-\tau) \in (-\infty, -1)$. Let us pick the solution given by the lower branch, i.e. the one such that $x+1\geq \tau^{-1}$. In that case, 
$$x= -\frac{1}{\tau}W_{-1}(-\tau)-1.$$
Therefore if we set $m=\lceil -\frac{1}{\tau}W_{-1}(-\tau) -1 \rceil \in \NN$ we obtain that 
$$\frac{\ln(N+1)}{N+1} \leq \tau, \quad \text{for all } N\in \NN, N\geq m.$$
Let us show that  $m \leq \lceil 2\frac{\rho}{\varepsilon} \ln\left (\frac{\rho}{\varepsilon}\right ) \rceil= \lceil \frac{2}{\tau} \ln\left (\frac{1}{\tau}\right ) \rceil$. For that purpose, we use following lower bound for the function $W_{-1}$ (see \cite[Theorem 1]{chatzigeorgiou2013bounds}):

$$W_{-1}(-e^{-u-1}) \geq -1-\sqrt{2u}-u, \quad \text{for all} \, u>0.$$ 

Taking $u=-\ln(\tau)-1>0$, we get that $-e^{-u-1}=-\tau$ and hence
\[
\begin{aligned}
 \left \lceil -\frac{1}{\tau}W_{-1}(-\tau) -1 \right \rceil&\leq 
\left \lceil\frac{1}{\tau}\left(1+\sqrt{2(-\ln(\tau)-1)}+(-\ln(\tau)-1)\right )-1\right\rceil,\\
 & = 
\left \lceil\frac{1}{\tau}\left(\sqrt{-2(\ln(\tau)+1)}-\ln(\tau)\right )-1\right\rceil\leq \left \lceil 2\frac{1}{\tau} \ln\left (\frac{1}{\tau}\right )-1\right \rceil,
 \end{aligned}
\]
where the last inequality follows from the trivial bound $\sqrt{-2(u+1)}\leq -u$ for any $u\leq -1$. Observe that the last quantity is at least $1$ for all $\tau \in (0,e^{-1})$.

\subsection{Proof of Theorem~\ref{thm:convergence}}\label{apx:convergence}

\noindent $(a)$  From \eqref{eq:bounded}, we have that 
\begin{equation*}
    \norm{x^n-x^*} \leq  \norm{x^0-x^*} + \sum_{i=1}^n B^n_i \norm{U_i} \leq  \norm{x^0-x^*} + \sum_{i=1}^n \norm{U_i}.
\end{equation*}

Let us consider the random sequence $(\sum_{i=1}^n \norm{U_i})_n$ which, by definition, is positive, nondrecreasing and converges to (the possibly inifite) random variable $U:=\sum_{i=1}^\infty \norm{U_i}$. Using the monotone convergence theorem, we have that $\mathbb E \left( U\right )= \sum_{n=1}^\infty \sigma_n < \infty$ and then $U$ is almost surely finite. 

Hence, given that $\norm{x^n-x^*}  \leq  \norm{x^0-x^*} +U$,  $(x^n)_n$ is almost surely bounded and consequently $\kappa:=\sup_{n\geq 1}\norm{Tx^n-x^0}$ also is.

Now, applying this estimation to \eqref{eq:residual}, we obtain that
\begin{equation*}
\norm{x^n-Tx^n}\leq  \kappa(1-\beta_n) + \sum_{i=1}^{n}B_i^{n}\left ( \kappa(\beta_i -\beta_{i-1}) + \beta_i\norm{U_i}+\beta_{i-1}\norm{U_{i-1}} \right )+ \beta_n\norm{U_n},
\end{equation*}
almost surely. We aim to prove that each of the terms involved in the inequality above goes to zero almost surely.  In that case, the conclusion of part $(a)$ follows by noting that, since $(x^n)_n$ is almost surely bounded and $\norm{x^n-Tx^n}$ goes to zero, any cluster point of $(x^n)_n$ belongs to $\Fix T$.

Let us fix a trajectory on the event
$A=\left \{U <+\infty \,\,\,\text{and}\,\,\, \kappa < \infty \right \}.$ 
 Recalling that we already proved that $\mathbb P (A)=1$, all what follows will hold almost surely. 

First, given that $\lim_{n\to \infty}\beta_n=1$, one has that $\kappa (1-\beta_n)$ vanishes. Also, using that $U$ is finite, $\lim_{n\to \infty}\norm{U_n}=0$.

Let us prove now that $\sum_{i=1}^{n}B_i^{n}\norm{U_{i}}$ goes to zero as $n$ goes to infinity. For this purpose, for all $n \in \NN$ we define the function  $B_\bullet^n: \NN \to \RR$ as
\[
B_\bullet^n(i)= 
\begin{cases}
    B_i^n=\prod_{j=i}^n \beta_j, & \text{if} \quad 1\leq i \leq n \\
    0, & \text{if}\quad  n< i.
\end{cases}
\]
Observe that $B_\bullet^n(i) \leq 1$ for every $i \in \NN$, and that, from the well-known exponential bound, one has
    \[
    B_\bullet^n(i) \leq \exp(-\sum_{j=i}^n (1-\beta_j)), \quad 1 \leq i \leq n.
    \]
Hence, 
\[
\lim_{n \to \infty}B_\bullet^n(i)=0, \quad \text{for all } i \in \NN,
\]
from the fact that $\sum_{j=1}^\infty (1-\beta_j)=\infty$ and that $B_\bullet^n(i)=0$ for $i >n$. Now, let us interpret the sequence $(\norm{U_i})_{i \geq 1}$ as a finite measure $\nu$ on the set $\NN$, {\em i.e.} $\nu(\{i\})=\norm{U_i}$ for all $i \in \NN$. We have that the sequence of functions  $B_\bullet^n$ is uniformly bounded (by the constant and $\nu$-integrable function one) and convergent point-wise to the zero function. By invoking the dominated convergence theorem, we have that 
\[
  \sum_{i=1}^{n}B_i^{n}\norm{U_i}= \int_\NN B_\bullet^n d\, \nu \to 0, \quad \text{as} \quad n \to \infty.
\]

  It remains to show that both $$u_n:=\sum_{i=1}^{n}B_i^{n}\norm{U_{i-1}} \quad \text{and} \quad  v_n:= \sum_{i=1}^{n}B_i^{n} \kappa(\beta_i -\beta_{i-1})$$ vanish as $n$ goes to infinity. 
  
  For both cases, the exact same analysis can be performed to conclude using the dominated convergence theorem. Recalling that $U_0=0$ and that $\beta_0=0$, for $u_n$ the finite measure to consider is $\nu(\{i\})=\norm{U_{i-1}}$, for all $i \in \NN$, while for $v_n$ it suffices to set $\nu(\{i\})=\kappa(\beta_{i}-\beta_{i-1})$, which is finite by hypothesis.

    \vspace{3ex}

\noindent $(b)$ This part follows closely the arguments used by \cite[Theorem 3.1]{xu2002iterative} to study strong convergence in the noiseless case for general Banach spaces. For $0<t<1$, let $z^t$ the unique solution of the fixed-point equation
    \begin{equation*}
    z^t= (1-t)\,x^0 + t\, T z^t.
    \end{equation*}
    A direct application \cite[Corollary 1]{reich1980strong} shows that $z^*:=\lim_{t \to 1}z^t$ exists and is a fixed-point of $T$. Let $J:(\RR^d, \norm{\cdot}) \to (\RR^d, \norm{\cdot}_*)$ be the (standard) duality map defined as the subdifferential of the function $\frac{1}{2}\norm{\cdot }^2$, {\em i.e.}
    \[
    J(x)= \{ x^* \in \RR^d\,|\, \langle x, x^* \rangle =\norm{x}^2= \norm{x^*}_{*}^2 \},
    \]
    where $\langle \cdot , \cdot \rangle$ and $\norm{\,\,\cdot\,\,}_*$  are the duality pairing and the dual norm, respectively. Therefore, the subdifferential inequality holds $\norm{x+y}^2 \leq \norm{x}^2 + 2 \langle y, J(x+y) \rangle$ for all $x,y \in \RR^d$.

    It is well known that a smooth finite-dimensional Banach space is uniformly smooth, and then $J$ is single valued and continuous on bounded sets. Using the subdifferential inequality and following exactly the same lines as in the proof of \cite[Theorem 3.1]{xu2002iterative}, we obtain that, for all $n \geq 1$,
    \[ 
    \begin{aligned}
    \norm{x^{n}- z^t}\leq & (1+(1-t)^2)\norm{z^t - x^{n}}^2 +\norm{x^{n}-Tx^{n}}\left (2\norm{z^t-x^{n}}+\norm{x^{n}-Tx^{n}} \right )+\\
    &+ 2(1-t) \langle x^0 -z^t, J( x^{n}-z^t)\rangle,
   \end{aligned}
    \]
    and then, 
    \[
    \langle x^0 -z^t, J(z^t - x^{n})\rangle \leq \frac{(1-t)}{2}\norm{z^t - x^{n}}^2 +\frac{\norm{x^{n}-Tx^{n}}}{2(1-t)}\left (2\norm{z^t-x^{n}}+\norm{x^{n}-Tx^{n}} \right ).
    \]
    Let us fix again a trajectory on the event $A$, where we know from $(a)$ that $\lim_{n \to \infty}\norm{x^{n}-Tx^{n}}=0$. So, 
    \[
    \limsup_{n \to +\infty}\langle x^0 -z^t, J(z^t - x^{n})\rangle \leq  \limsup_{n \to +\infty} \frac{(1-t)}{2}\norm{z^t - x^{n}}^2.
    \]
    Using the fact that $(x^n)_n$ is bounded on $A$, that $J$ is continuous, and taking the limit when $t \to 1$ we obtain
    \begin{equation}\label{eq:aux}
    \limsup_{n \to +\infty}\langle x^0 -z^*, J(z^* - x^{n})\rangle\leq 0.
        \end{equation}
    Now, on the other hand, using again the subdifferential inequality
\[
 \begin{aligned}
\norm{x^n - z^*}^2 =& \norm{(1-\beta_n)(x^0-z^*) + \beta_n (T x^{n-1}  -z^* +U_n)}^2 \\
\leq& \beta_n^2\norm{T x^{n-1} - z^* + U_n}^2 + 2 (1-\beta_n)^2\langle x^0 -z^*, J( x^{n}-z^*) \rangle,\\
\leq& \beta_n \norm{x^{n-1} - z^*}^2 + \beta_n^2\left ( 2\norm{x^{n-1} - z^*}\norm{U_n} +  \norm{U_n}^2  \right )\\
& \hspace{15ex}+ 2 (1-\beta_n)\langle x^0 -z^*, J( x^{n}-z^*) \rangle.
   \end{aligned}
\]
Finally, noticing that 

$$\sum_{n=1}^\infty \beta_n^2\left ( 2\norm{x^{n-1} - z^*}\norm{U_n} +  \norm{U_n}^2  \right ) < +\infty$$  along with \eqref{eq:aux} one can readily use \cite[Lemma 2.5]{xu2002iterative} to conclude that $\norm{x_n - z^*}$ goes to zero, that is, $x_n\to z$ as $n \to \infty$ as claimed. 

\subsection{Proof of Lemma~\ref{lemma:norm1bound}} \label{app:norm1bound}
For $n<d$, let us define $\phi_n:\RR^d \to \RR$ for
$$\phi_n(x) = |\lambda-x_1| + \sum_{i=2}^{n} |P_{[0, \lambda]}(x_{i-1})-x_i| +  P_{[0, \lambda]}(x_{n}).$$

We will prove by induction over $n$ that $\phi_n(x) \ge \lambda$ for every $x\in \RR^d$. The result of the lemma follows by noticing that if $\mbox{prog}(x)<d$ then $\norm{x-Tx}_1 = \phi_{\mbox{prog}(x)}(x)$. 

For $n=1$,  $\phi_1(x) = |\lambda - x_1| + P_{[0,\lambda]}(x_1)$. We have three cases:
\begin{itemize}
    \item if $x_1\in [0,1]$ then $\phi_1(x_1) = (\lambda-x_1)+x_1 = \lambda$,
    \item if $x_1>\lambda$ then $\phi_1(x_1)= (x_1-\lambda) + \lambda = x_1 > \lambda$, 
    \item if $x_1 <0$ then $\phi_1(x_1) = (\lambda-x_1) > \lambda$,
\end{itemize}
hence, the results is valid for $n=1$. 

Assume that $\phi_{n-1}(x)\ge \lambda$ for all $x\in \RR^d$. We can write
\[
\begin{aligned}
    \phi_n(x) & = |\lambda-x_1| + \sum_{i=1}^n|P_{[0,\lambda]}(x_{i-1})-x_i| + P_{[0,\lambda]}(x_n) \\
    & = \phi_{n-1}(x) + |P_{[0,\lambda]}(x_{n-1})-x_n| + P_{[0,\lambda]}(x_n) - P_{[0,\lambda]}(x_{n-1}) \\
    & \ge \lambda + |P_{[0,\lambda]}(x_{n-1})-x_n| + P_{[0,\lambda]}(x_n) - P_{[0,\lambda]}(x_{n-1})
\end{aligned}
\]

Therefore, it is sufficient to prove that the function $\eta(z, z'):=|P_{[0,\lambda]}(z)-z'| + P_{[0,\lambda]}(z') - P_{[0,\lambda]}(z)$ is nonnegative for all $(z,z') \in \RR^2$. We analyze the following cases:

\begin{itemize}
    \item if $z\in [0,\lambda]$ then for the nonexpansiveness of $P_{[0, \lambda]}$ we have
    $$P_{[0, \lambda]}(z)-P_{[0, \lambda]}(z') \le |P_{[0, \lambda]}(z)-P_{[0, \lambda]}(z')| \le |z-z'| = |P_{[0, \lambda]}(z)-z'|,$$
    which implies $\eta(z,z') \ge 0$ for all $z'\in \RR$. 

    \item If $z>\lambda$ then $\eta(z,z') = |\lambda-z'| + P_{[0, \lambda]}(z') - \lambda$ which is nonnegative again for the nonexpansiveness of $P_{[0, \lambda]}$, namely
     $$\lambda-P_{[0, \lambda]}(z') =  |P_{[0, \lambda]}(\lambda)-P_{[0, \lambda]}(z')| \le |\lambda - z'|.$$

    \item Finally, if $z<0$ we have $\eta(z,z') = |z'| + P_{[0, \lambda]}(z')$ which is clearly nonnegative.
    
\end{itemize}

\subsection{Proof of Theorem~\ref{teo_average}} \label{app:average}

Let us analyze the Benchmark Algorithm~\ref{alg_av_bench}. Recall that $r_{\max}=\max_{(s,a)} r(s,a)$, and let $\Delta:= |r_{\max} - v^*|$. Given that $Q^0=0$, the iteration takes the simple form $Q_v^{n}=\frac{n}{n+1}\tilde \H_n Q_v^{n}$, so
\begin{align*}
   \norm{Q_v^{n}}_\infty &\le 
   \frac{n}{n+1}\norm{\tilde \H_n Q_v^{n}}_{\infty} \le \frac{n}{n+1}\left(\Delta+\norm{Q^{n-1}_v}_\infty\right).
   \label{bound-Qn_v}
\end{align*}
Iterating the last inequality until $n=1$, we find the bound 
$$\norm{Q^n_v}_\infty \le C_{n+1}:= \frac{n}{2}\Delta.$$ This shows that one can obtain a deterministic, rather rough, {\em a priori} estimate on the rate with which the state variable can increase. 

 Let $\mathcal F^0$ be the trivial $\sigma$-field and consider the natural filtration $(\FF^n)_{n \geq 0}$ of the process $(Q_v^n)_{n \geq 1}$, that is, $\FF^n= \sigma (\{\mathbf{s}^m=(s_i^m(\cdot,\cdot))_{i=1}^{k_m}, m=1,\ldots, n \})$, where $\mathbf{s}^m$ is the collection of $k_m$ samples at the $m$-th iteration used to compute $Q^m_v$ in Algorithm~\ref{alg_av_bench}.
 
 For $n \geq 2$, let $\mathcal F_0^n=\mathcal F_n$ and, for $1\leq j\leq k_n$, we consider the $\sigma$-field that includes information until the $j$-th sampling when computing $Q^n_v$, that is  $$\mathcal F_j^n= \sigma (\{\mathbf{s}^m, m=1,\ldots, n \}, (s_k^{n+1}(\cdot,\cdot))_{k=1}^{j}).$$
Observe that with this definition $\mathcal F_{k_n}^n=\mathcal{F}_0^{n+1}=\mathcal F^{n+1}$ and that if we write
\begin{align*}
U_n(s,a)
& = \sum_{i=1}^{k_n}\underbrace{\frac{1}{k_n}\left(\max_{a'\in \A}Q^{n-1}_v(s_i^n(s,a),a')-\sum_{s'\in \S}p(s'|s,a)\max_{a'\in \A}Q^{n-1}_v(s',a')\right)}_{:=X^n_i(s,a)},
\end{align*}
then $X^n_i(s,a)$ is $\FF_{i}^{n-1}$-measurable. Let $Z^n_j= \sum_{i=1}^{j}X^n_i(s,a)$, $j=1,\ldots, k_n$ and $Z^n_0=0$. By construction, $(Z_j^n)_{j=0}^{k_n}$ is a zero-mean martingale with respect to the filtration $(\mathcal F^{n-1}_j)_{j=0}^{k_n}$ where $Z^{n}_{k_n}=U_n(s,a)$. Also, for all $j=1,\ldots, k_n$,
\[
|Z^{n}_{j}- Z^{n}_{j-1}|=|X^n_j(s,a)|\le 2\frac{\norm{Q^{n-1}_v}_\infty}{k_n} \le \frac{2C_n}{k_n}.
\]
Observe that this implies that $\norm{U_n}_\infty \leq 2C_n$ for all $n \in \NN$. Hence, using the classical (double-sided) Azuma-Hoeffding inequality for $n\geq 2$ (since $U_1=0$), for any $t>0$ we have 
\begin{equation*}
\mathbb{P}(|Z_{k_n}^n(s,a)| \ge t) =\mathbb{P}(|U_n(s,a)| \ge t) \le 2\exp\left(-\frac{t^2}{2k_n(2C_nk_n^{-1})^2}\right) = 2\exp\left(-\frac{k_nt^2}{8C_n^2}\right).
\end{equation*}

Setting $t= \frac{C_n}{\sqrt{k_n}}\theta_n$, with $\theta_n=\sqrt{8\ln (2\sqrt{k_n}|\S\times \A|})$, so that
\[
2\exp\left(-\frac{k_nt^2}{8C_n^2}\right)= \frac{1}{\dim \sqrt{k_n}},
\]

we get  
\begin{equation*}
\mathbb{P}\left (|U_n(s,a)| \ge \frac{C_n}{\sqrt{k_n}}\theta_n \right ) \le \frac{1}{\sqrt{k_n}|\S\times \A|}.
\end{equation*}
Using a union bound over the state-action pairs $\S \times \A$, we obtain that 
\[
\mathbb{P}\left (\norm{U_n}_\infty \ge \frac{C_n}{\sqrt{k_n}}\theta_n \right ) \le \frac{1}{\sqrt{k_n}},
\]
or, equivalently, 
\begin{equation*}
\norm{U_n}_\infty \le \frac{C_n}{\sqrt{k_n}}\theta_n\quad\text{with probability at least} \,\, 1-\frac{1}{\sqrt{k_n}}.
\end{equation*}
Now, from the law of total expectation
\begin{equation}\label{expected_Un}
\EE(\norm{U_n}_\infty) \le \frac{C_n}{\sqrt{k_n}}\theta_n + 2\frac{C_n}{\sqrt{k_n}},
\end{equation}
where we have used that $\norm{U_n}_\infty \leq 2 C_n$ for all $n \geq 2$.

Recalling that $\sigma_n=\EE(\norm{U_n}_\infty)$, the definition of $C_n$, $\sigma_n=\EE(\norm{U_n}_\infty)$, and that $k_n = \lceil n^6 \ln(n+1) \rceil$, from \eqref{expected_Un} we obtain the estimate
$$
\begin{aligned}
n\sigma_n &\leq\frac{\frac{1}{2}(n-1)\Delta}{n^2\sqrt{\ln(n+1)}}\left ( \theta_n + 2 \right ) \leq \dfrac{\Delta}{n}\cdot  \underbrace{\sup_{m \geq 2}\frac{\theta_m + 2 }{\sqrt{\ln(m+1)}}}_{:=M<\infty},
\end{aligned}
$$
where $M$ is finite,\footnote{Numerically, we obtain that $M \leq 5.93392 \sqrt{\ln(|\S \times \A|)}$.} only depending  on $\sqrt{\ln (\dim)}$.  Notice that for $n=1$, $\sigma_1=0$ since $Q^0=0$ but we include the bound in what follows for simplicity.

So, recalling that $B_i^n=\prod_{j=1}^n\beta_j=\frac{i}{n+1}$ for our specific choice of $\beta_n$, we get 
\begin{align}
\sup_{n\ge 1}\sum_{i=1}^n B_i^n\sigma_i&= \sup_{n\ge 1}\frac{1}{n+1}\sum_{i=1}^n i \sigma_i\leq \sup_{n\ge 1}\frac{1}{n+1}\sum_{i=1}^n\frac{\Delta}{i^2}\frac{\theta_i + 2}{\sqrt{\ln(i+1)}} \leq \frac{1}{2}M\Delta.\label{ineq1}
\end{align}

Let us see now that $\dist(0, \Fix \H) \leq \hsp$. Recall that $\mathbf{u} \in \mathbb{R}$ is a vector of ones. Given that $h^*$ is a solution to the optimal bias equation \eqref{average-value}, we have that $\bar h = h^* - \min_s h^*(s) \mathbf{u} \in \mathbb{R}^{\mathcal{S}}$ is also a solution, with $\hsp = ||\bar h||_\infty$ and positive entries. By definition, $\bar Q \in \mathbb{R}^{\mathcal{S} \times \mathcal{A}}$, where $\bar Q(s,a) := \sum_{s' \in \mathcal{S}} p(s'|s,a) \left( r(s,a) + \bar h(s') \right)$, is a solution to the Bellman equation \eqref{q_fact_av}, also with positive entries. Thus, $\norm{\bar Q}_\infty = \max_{s,a} \bar Q(s,a) \leq r_{\max}+ \max_{s} \bar h(s) = r_{\max}+ \hsp$ and $\dist(0, \Fix \H) \leq 2\hsp$.

Now let us compute the constants involved in terms of the parameter $\hsp$. From \eqref{ineq1} show that the assumption of Lemma~\ref{lem:kappa}$(ii)$ hold with $S= \frac{1}{2}M\Delta$ and therefore, using that $\Delta \leq r_{\max}$, \eqref{H1} holds with 
$$ \frac{1}{2}M \Delta +2\dist(0, \Fix \H )\leq \frac{1}{2}M r_{\max} +4\hsp\leq \left(\frac{1}{2}M  +4\right)\hsp :=\overline \kappa,$$
where we have used that $r_{\max} \leq 1\leq \hsp$.
On the other hand, 
\begin{align*}
\sum_{n=1}^N n \sigma_n &\leq M\Delta\sum_{n=1}^N \frac{1}{n} \leq M r_{\max}\sum_{n=1}^N \frac{1}{n} \leq  \frac{3}{2}M \hsp \ln(N+1).
\end{align*}

 This shows that we can use Lemma~\ref{lem:sigma_explicito} with $C=\frac{3}{2} M \hsp$, we conclude that Algorithm~\ref{alg_av_bench} finds a solution $Q^N_v$ such that $\EE(\norm{{\H}Q^N_v - Q^N_v}_\infty) \leq \varepsilon$ in at most 
\begin{equation*}
N=\left \lceil 2\frac{\rho}{\varepsilon} \ln\left (\frac{\rho}{\varepsilon}\right )\right \rceil, \quad \text{with} \quad \rho= \left (\frac{9}{2}M + 12\right) \hsp
\end{equation*}
iterations and the bound continues to hold afterwards. Note that if $\varepsilon/\rho \geq e^{-1}$ then one iteration is sufficient and the complexity is $\dim$. To conclude, assuming that  $\varepsilon/\rho <e^{-1}$ we observe that for each iteration $n=1,...,N$ our algorithm uses a total of $|\S\times\A|n^6 \sqrt{\ln(n+1)}$ sample calls and then, the overall complexity is
\[
\begin{aligned}
|\S\times\A|\sum_{n=1}^N n^6 \ln(n+1)\leq  \frac{1}{7} |\S\times\A|\ln \left (\left \lceil 2\frac{\rho}{\varepsilon} \ln\left (\frac{\rho}{\varepsilon}\right )\right \rceil+1 \right ) \left (\left \lceil 2\frac{\rho}{\varepsilon} \ln\left (\frac{\rho}{\varepsilon}\right )\right \rceil+1 \right )^7.
\end{aligned}
\]

\subsection{Sketch of Proof of Proposition~\ref{prop:high_proba}} \label{app:high_proba}
Again, we analyze Algorithm~\ref{alg_av_bench} and we conserve all the notation introduced there. The key idea is that, given that we have large minibatches, it is possible to obtain a bound that holds for all $n \geq 1$ with arbitrary high probability. 

More precisely, and following exactly the lines in the proof of Theorem~\ref{teo_average}, we have that for all $t>0$
\begin{equation*}
\mathbb{P}(|U_n(s,a)| \ge t) \le 2\exp\left(-\frac{k_nt^2}{16C_n^2}\right).
\end{equation*}
Now, given $\delta \in (0,1)$, we can pick $t= $ in order to have that 

\begin{equation*}
\mathbb{P}(|U_n(s,a)| \ge ) \le \frac{\delta}{K\dim\sqrt{k_n}}:=\delta_n,
\end{equation*}
where $K= \sum_{n \geq 1}\frac{1}{\sqrt{k_n}}<+\infty$. This implies that $\sum_{n\geq 1}\delta_n=\frac{\delta}{|\S \times \A|}$.

 Therefore, using a simple union bound over the state-action pairs and then over all $n \in \NN$ we get that, with probability at least $1-\delta$
\begin{equation}\label{eq:app}
(\forall n\geq 1)\,\,\,\,\norm{U_n}_\infty \le \frac{C_n}{\sqrt{k_n}}\sqrt{8 \ln \left (\frac{2K\sqrt{k_n}|\S\times \A|}{\delta} \right )}.
\end{equation}
Therefore, we can work on the event $E$ defined by \eqref{eq:app} knowing that $\mathbb P(E) \geq 1-\delta$. The bounds that follow are exactly the same ideas developed in this paper, but estimating now directly $\norm{U_n}_\infty$ and not its expectation. For instance, from \eqref{eq:app} we can bound exactly as before the quantity $\sup_{n\ge 1}\sum_{i=1}^n B_i^n\norm{U_i}_\infty \leq S < +\infty,$ and then find $\bar \kappa= S+2\dist(Q^0,\Fix \H) $ such that $\sup_{n\in N} \norm{\H Q^n - Q^0}\leq \bar \kappa$ by following line by line the proof Lemma~\ref{lem:kappa}$(ii)$. The conclusion follows by the exact same estimations given in the proof of Theorem~\ref{teo_average} that hold now on the event $E$.
\subsection{Proof of Theorem~\ref{teo_discounted}} \label{app:discounted}
First of all, the value of $M$ in the statement of the theorem is
\begin{equation}\label{bound_M}
 M= 2\sup_{N \geq 1} \frac{1+\sqrt{8\ln\left(4|\S\times \A|(N+1)\right)}}{\ln(N+1)}.
\end{equation}
Let us write
\[
Q^n(s,a)=r(s,a)+ \sum_{i=1}^{k_n}\frac{\gamma}{k_n}\max_{a'\in \A}Q^{n-1}(s_i^n(s,a),a')\leq r_{\max} + \gamma \norm{Q^{n-1}}_\infty.
\]
A simple induction shows that if $\norm{Q^0}_\infty \le \frac{r_{\max}}{1-\gamma}$, then $\norm{Q^n}_\infty\le \frac{r_{\max}}{1-\gamma}$ for all $n\ge 1$.  Moreover, $\norm{Q^*}\leq \frac{\gamma r_{\max}}{1-\gamma}$ (\cite[Lemma 1]{wainwright2019stochastic}). Observing that 
\begin{align*}
U_n(s,a)= \sum_{i=1}^{k_n}\underbrace{\frac{\gamma}{k_n}\left(\max_{a'\in \A}Q^{n-1}(s_i^n(s,a),a')-\sum_{s'\in \S}p(s'|s,a)\max_{a'\in \A}Q^{n-1}(s',a')\right)}_{:=X_i^n(s,a)},
\end{align*}
we get that $|X_i^n(s,a)|\leq 2\frac{\gamma }{k_n}\norm{Q^{n-1}}_\infty\leq 2\frac{\gamma }{k_n}\frac{r_{\max}} {(1-\gamma)}.
$
Notice that this implies that $\norm{U_n}_\infty\leq 2\frac{\gamma r_{\max}}{(1-\gamma)}$. Recall that $ k_n=\lceil\gamma^{N-n}n^2 \rceil$ and define $n(\gamma)$ the first $n$ for which $k_n\geq 2$, i.e. $n(\gamma)= \inf \{1\leq n\leq N\,:\, k_n\geq 2\}$. If such $n$ does not exists, we set $n(\gamma)=N+1$ by convention. This means that the algorithm takes only one sample matrix at each state. As in the proof of Theorem~\ref{teo_average}, based on the Azuma-Hoeffding inequality, we have that for all $t>0$ and for all $n \geq n(\gamma)$,
\begin{equation*}
\mathbb{P}(|U_n(s,a)| \ge t) \le 2\exp\left(-\frac{t^2}{2k_n(2\gamma (1-\gamma)^{-1}r_{\max}k_n^{-1})^2 }\right) = 2\exp\left(-\frac{k_n t^2 (1-\gamma)^2}{8\gamma^2 r_{\max}^2 }\right),
\end{equation*}

Let us choose $t= \gamma(1-\gamma)^{-1}r_{\max}k_n^{-1/2}\sqrt{8 \ln (2 \dim \sqrt{k_n})}$ so that 
$$2\exp\left(-\frac{k_n t^2 (1-\gamma)^2}{8\gamma^2 r_{\max}^2 }\right)= \frac{1}{\sqrt{k_n}\dim}.$$ 
Recall that $ k_n=\lceil\gamma^{N-n}n^2 \rceil$ and observe that $0<\sqrt{8 \ln (2 \dim \sqrt{k_n})}\leq \sqrt{8 \ln (2 \dim (N+1))}:=\theta_N$ for all $n \geq n(\gamma)$. From a union bound over the state-action pairs, we obtain that 
\begin{equation*}
\norm{U_n}_\infty \le \frac{\gamma r_{\max}}{\sqrt{k_n}(1-\gamma)}\theta_N\quad\text{with probability at least} \,\, 1-\frac{1}{\sqrt{k_n}}.
\end{equation*}
 Then, from the law of total expectation, using that $\norm{U_n}_\infty\leq \frac{2\gamma r_{\max}}{1-\gamma}$, we have that for all $n \geq n(\gamma)$
\begin{equation}\label{eq:aux2}
\EE(\norm{U_n}_\infty)=\sigma_n \le \frac{\gamma r_{\max}}{\sqrt{k_n}(1-\gamma)}\theta_N+ \frac{2\gamma r_{\max}}{1-\gamma}\frac{1}{\sqrt{k_n}}\leq \frac{ 2r_{\max}}{n \gamma^{\frac{N-n}{2}}(1-\gamma)}\left (\theta_N+1\right).
\end{equation}
Observe that for all $1\leq n<n(\gamma)$ we have simply that $\EE(\norm{U_n}_\infty)\leq \frac{2\gamma r_{\max}}{1-\gamma}$ and the inequality \eqref{eq:aux2} holds when $k_n=1$ since $n\gamma^{\frac{N-n}{2}}\leq 1$.  Therefore, \eqref{contractive-bound} imply that
\begin{align*}
    \EE(\norm{Q^N- Q^*}_\infty) &\le \frac{1}{N+1} \left( \sum_{n=0}^N \gamma^{N-n} \norm{Q^*}_\infty + \sum_{n=1}^N \gamma^{\frac{N-n}{2}}n\sigma_n \right) \\
    &\le \frac{1}{N+1} \left(\frac{2r_{\max}}{(1-\gamma)^2} + \frac{2r_{\max}}{(1-\gamma)(1-\sqrt{\gamma})}\left (\theta_N+1 \right)\right) \\
    &\le \frac{4r_{\max}}{(N+1)(1-\gamma)^2}\left (\theta_N+1 \right)\\
&\leq 4M\frac{r_{\max}}{(1-\gamma)^2}\frac{\ln(N+1)}{N+1}:= \rho \frac{\ln(N+1)}{N+1}.
\end{align*}
In the second line we used that $\norm{Q^*}_\infty \leq \norm{Q^0}_\infty+\norm{Q^*}_\infty\leq 2r_{\max}/(1-\gamma)$, that $\sum_{n=1}^N \gamma^{N-n}\leq (1-\gamma)^{-1}$, and the same for the sum involving powers of $\sqrt{\gamma}$. In the third line we used that $(1-\sqrt{\gamma})^{-1}\leq 2(1-\gamma)^{-1}$ and that $\gamma \leq 1$.

Therefore, using the argument in Lemma~\ref{lem:sigma_explicito}, we have that $\EE(\norm{Q^N- Q^*}_\infty)\leq \varepsilon$ if $N = \left \lceil 2 \frac{\rho}{\varepsilon} \ln \left ( \frac{\rho}{\varepsilon} \right) \right \rceil$ and the claimed bound for $Q^N$ holds. Finally, the total sample complexity can be bounded as
$$|\S\times\A|\sum_{n=1}^N k_n \le |\S\times\A|\sum_{n=1}^N (n^2\gamma^{N-n}+1) \leq \frac{|\S\times\A|(N^2 + (1-\gamma)N)}{(1-\gamma)} = \tilde{O}\left(\frac{|\S\times\A|r_{\max}^2}{\varepsilon^2 (1-\gamma)^5}\right).$$
Finally, the result with high probability follows the same modifications described in the average reward case.

\end{document}